\definecolor{darkgreen}{rgb}{0,0.6,0}
\definecolor{red}{rgb}{0.7,0.15,0.15}
\definecolor{darkblue}{rgb}{0,0,0.6}
 \newtheorem{theorem}{Theorem}[section]
\newtheorem{lemma}[theorem]{Lemma}
\newtheorem{prop}[theorem]{Proposition}
\newtheorem{property}{Property}
\newtheorem{definition}[theorem]{Definition}
\theoremstyle{definition}
\newtheorem{ex}[theorem]{Example}
\newtheorem{defn}[theorem]{Definition}
\theoremstyle{remark}
\newtheorem{remark}[theorem]{Remark}
\newtheorem*{assumption*}{\assumptionnumber}
\providecommand{\assumptionnumber}{}
\newenvironment{assumption}[2]
{%
	\renewcommand{\assumptionnumber}{Assumption (#1-#2)}%
	\begin{assumption*}%
		\protected@edef\@currentlabel{(#1-#2)}%
	}
	{%
	\end{assumption*}
}
\newcommand{\Pm}{\mathbb{P}}
\newcommand{\PS}{(\Omega, \mathcal{F},\left(\mathcal{F}_{t}\right)_{t\geq 0}, \mathbb{P})}
\newcommand{\ut}{\textbf{u}_{n}}
\newcommand{\uti}{\tilde{\textbf{u}}_{n}}
\newcommand{\f}{\widehat{\phi}_{n}(\uti)}
\newcommand{\fit}{\widehat{\phi}_{n}(\ut)}
\newcommand{\cl}{\phi_{n}(\ut)}
\newcommand{\clt}{\phi_{n}(\uti)}
\newcommand{\R}{\mathbb{R}}
\newcommand{\E}{\mathbb{E}}
\newcommand{\ch}{\mathds{1}}
\newcommand{\var}{\textsf{Var}}
\newcommand{\tv}{\textsf{TV}}
\newcommand{\class}{\mathcal{L}^{r}_{M}}
\newcommand{\estc}{\widehat{C}_{n}^{12}}
\newcommand{\cov}{C^{12}}
\newcommand{\cfx}{\psi_{n}(U)}
\newcommand{\cfxi}{\tilde{\psi}_{n}(U)}
\newcommand{\cfy}{\phi_{n}(U)}
\numberwithin{equation}{section}
\begin{document}
	
	\begin{frontmatter}
		\title{Minimax rates for the covariance estimation of multi-dimensional L\'evy processes with high-frequency data}
		\runtitle{Minimax rates for co-integrated volatility}
		
		\begin{aug}
			\author{\fnms{Katerina}  \snm{Papagiannouli}
				\ead[label=e1]{papagiai@hu-berlin.de}}
			
			
			\runauthor{K. Papagiannouli}
			
			\affiliation{Humboldt-Universit\"at zu Berlin}

		\address{ Institut f\"ur Mathematik\\Humboldt-Universit\"at zu Berlin\\ Unter den Linden 6\\10099 Berlin\\Germany\\ \printead{e1}}
			\end{aug}

		\begin{abstract}
			This article studies nonparametric methods to estimate the co-integrated volatility for multi-dimensional L\'evy processes with high frequency data. We construct a spectral estimator for the co-integrated volatility and prove minimax rates for an appropriate bounded nonparametric class of L\'evy processes. Given $ n $ observations of increments over intervals of  length $1/n$, the rates of convergence are $1 / \sqrt{n} $ if $ r\leq 1 $ and $ (n\log n)^{(r-2)/2} $ if $ r>1 $, where $ r $ is the co-jump index activity and corresponds to the intensity of dependent jumps. These rates are optimal in a minimax sense. We bound the co-jump index activity from below with the harmonic mean. Finally, we assess the efficiency of our estimator by comparing it with estimators in the existing literature. 
		\end{abstract}
		
		\begin{keyword}[class=MSC]
			\kwd[Primary ]{60G51}
			\kwd{62G05}
			\kwd[; secondary ]{62G10}
			\kwd{62C20}
			\kwd{60J75}
		\end{keyword}
		
		\begin{keyword}
			\kwd{co-jumps}
			\kwd{infinite variation}
			\kwd{co-integrated volatility}
			\kwd{high-frequency data}
		\end{keyword}

	\end{frontmatter}

\section{Introduction}
L\'evy processes are the main building blocks for stochastic continuous-time jump models. Whenever the modeling of a stochastic process in finance requires the inclusion of jumps, L\'evy processes are those to be considered. They play an instrumental role, for example, in the modeling of financial data, see \cite{carr2002, barndorff2004, barndorff2006, wu2007, eberlein2005, geman2002}. \par

Consequently, the large amount of applications has given rise to a great demand for statistical methods in the study of L\'evy processes, especially nonparametric methods. Using nonparametric methods relaxes any dependency on the model. The problem of estimating the characteristics of a L\'evy process has received considerable attention over the past decade. Starting with the work by \cite{belomestny2006spectral}, a number of articles have considered nonparametric estimation methods for L\'evy processes. Therefore, one important task is to provide estimation methods for the characteristics of a L\'evy process. \par

Moreover, statistical methods require the nature of the observation schemes to be classified as high frequency or low frequency; here, we focus on a high frequency setting. If we can assume high-frequency observations for a L\'evy process, we can discretize a natural estimator based on continuous-time observations, where the jumps and the diffusion part are observed directly. In recent years, the literature on this subject has grown extensively, see \cite{figueroa2004nonparametric, todorov2011volatility, coca2018efficient, comte2009nonparametric, neumann2009nonparametric}. We now have vast amounts of data on the prices of various assets, exchange rates, and so on, typically \textit{tick data} which are recorded at every transaction time. \par

Much has been written on the estimation of L\'evy density using nonparametric techniques, for instance \cite{Nic}, \cite{duval2017compound}, \cite{comte2014nonparametric} and the references therein. However, we are interested in the estimation of the continuous part of a L\'evy  process, although jumps still play a central role in this estimation. In the univariate context, the seminal work of \cite{andersen1998answering} and \cite{barndorff2002}, proposed realized variance as an estimator for quadratic variation. In the presence of jumps, a well-known theoretical result proves that the realized variation converges in probability to the global quadratic variation as the time between two consecutive observations tends towards zero. This result motivated estimators that filter out the jumps, like Bipower Variation by \cite{barndorff2004} and Truncated Realized Variation by \cite{mancini2009}.

In the multivariate context, the recovery of co-integrated volatility (also known as covariance) becomes more complicated. Among various prominent works see \cite{pod1}, \cite{bibinger2015estimating}, \cite{bibinger2015econometrics}. For models incorporating jumps, the realized covariation converges in probability to the global quadratic variation containing the co-jumps. Co-jumps refer to the case when the underlying processes jump at the same time with the same direction. This raises the question how we can assess the dependent structure among the jump components. We find the answer in the L\'evy copula,  a subject studied by \cite{pt} in his PhD thesis. The interested reader should refer to \cite{tankov2003financial} and \cite{kallsen2006characterization}. The L\'evy Copula is the basic tool for the class of multidimensional L\'evy processes. To mention only the few approaches which are close to our focus on L\'evy processes we refer to \cite{mancini2017truncated}, \cite{pod1}, \cite{martin2017null}, \cite{bibinger2014estimating}, \cite{jacod2014remark}, \cite{bucher2013nonparametric} and \cite{MR3825892}.\par

Our aim in the present work is to provide minimax rates of convergence for the estimation of co-integrated volatility when the underlying process belongs to a certain class of multi-dimensional L\'evy processes. Many features of co-integrated volatility have already been studied, such as asynchronous observations, microstructure noise, and allowing for dependency among the jumps components. Whereas most of the aforementioned results prove central limit theorems for their estimators, at least to the best of our knowledge no work has dealt with optimal rates of convergence in the minimax sense. This work serves to fill this gap. \cite{jacod2014remark} proposed a spectral estimator for integrated volatility achieving minimax rates. In the present work, we generalize their work on finite dimensions. By virtue of simplicity, we will concentrate primarily on a two-dimensional regime, but extensions to the general multi-dimensional setting are straightforward to obtain as well. \par

For this purpose let us define, for a two-dimensional L\'evy process X, the Blumenthal- Getoor index $ r^{*}$:\\
\begin{equation}\label{bg}
B(r)= \int_{\R^{2}} \left(1\wedge \|\textbf{x}\|^{r}\right)F(d\textbf{x}), \quad I = \left\{0<r<2: B(r)< \infty \right\}, \quad r^{*}= \inf I
\end{equation} 
where $\textbf{x}= (x_{1}, x_{2})\in \R^{2} $ is the size of the jump components, $ \|\cdot\| $ is the Euclidean norm in $ \R^{2} $, $ F $ is the L\'evy measure. $ B(r) $ is not specifically interesting but the $ BG  $- index gives us the infimum number $ r$ for which $ B(r) $ is finite. This index is a very important number for the L\'evy processes, because using this index we can infer about the behavior of small jump components around $ 0 $. When we have a two-dimensional L\'evy process we have either independent jumps (i.e. disjoint or jumps in the axes) or dependent (i.e. co-jumps or joint jumps). In the present work we focus on the case of co-jumps, when the two marginals jump at the same time in the same direction. So the index $ r^{*}$ gives us information about the amount of disjoint and joint small jumps around $ 0 $. The behavior of co-jumps around $ 0 $, is described by 
\begin{equation}\label{cj}
\int_{\R^{2}}\left(1\wedge |x_{1}x_{2}|^{r/2}\right)F(dx_{1},dx_{2})< \infty.
\end{equation}
 \par
Here, we are interested in investigating the optimal rates for the estimation of co-integrated volatility when the model falls in a class of two-dimensional L\'evy processes, in case the jump components are either of finite or infinite variation and satisfy (\ref{cj}). Let $ X=(X^{(1)}, X^{(2)} ) $ and $ r_{1} $, $ r_{2} $ be the index of jump activity for the small jump components of each process $ X^{(1)}, X^{(2)} $ respectively. We find that $ r $, the index activity of co-jumps, is bounded from below by the harmonic mean of $ r_{1}, r_{2} $, even in the case of infinite variation jumps. This was not known up to now. Under this assumption for co-jumps we show that our spectral estimate for co-integrated volatility converges at a rate $(n\log n)^{\frac{(r-2)}{2}}$ if $r>1$ and $\frac{1}{\sqrt{n}}$ if $r \leq 1$. \par

Assuming a 2-dimensional It\^o semimartingale, \cite{mancini2017truncated} proposed a truncated covariance estimator to estimate co-integrated volatility at the rate $ \frac{1}{\sqrt{n}} $ when $ r_{1} $ is small and $ r_{2} $ is close to 1,
$n^{-\frac{1}{2}\big(1+\frac{r_{2}}{r_{1}}-r_{2}\big)}$ when $r_{1},r_{2}$ is much bigger than $ 1 $ and close to $ 2 $, $n^{\big(\frac{r_{2}}{2} - 1\big)}$ when $r_{1}$ is small and  $r_{2}$ is much bigger than $ r_{1} $ or in case of independent small jump components. However, these rates are sub-optimal for the class which we described in the last paragraph .\par

Let us describe the outline of this paper. In \Cref{sec:underlyingmodel} we state the underlying model. In \Cref{sec:assumptions} we give the assumptions to be satisfied in order to prove the minimax rates. In \Cref{theoretical_results} we construct our spectral estimator and state the results of this work. \Cref{sec:cojumpindex} gives the insight behind the co-jump index activity. In \Cref{sec:upperbound} we prove the upper bound for the family of our estimators.  In \Cref{sec:lowerbound} we present the proof of lower bound in a \textit{minimax} sense. We provide some comparison of our estimator with existent estimators in the literature in \Cref{sec:remarks}. In the last section we provide a simulation study. 


\section{The underlying model} \label{sec:underlyingmodel}
We assume equidistant discrete observations with the consecutive time between two observations being $ i\Delta_{n}, i=0,\cdots,n $ for a mesh $ \Delta_{n}\to 0 $. Here, we use as a mesh $ \Delta_{n}=\frac{1}{n} $ and $ n\to \infty $. Regarding the time horizon of the process, it is observed on a finite time span $ [0,1] $. Let $ \textbf{X}=(X^{(1)}, X^{(2)})^\top $ be a two-dimensional L\'evy process with L\'evy-It\^o decomposition as 
\begin{equation}\label{2dimle}
\textbf{X}_{t}=\textbf{b}t+\textbf{W}_{t}+\int_{0}^{t}\int_{\|\textbf{x}\| \leq 1}\textbf{x}(\mu -\tilde{\mu})(ds,d\textbf{x})+ \int_{0}^{t}\int_{\|\textbf{x}\| > 1}\textbf{x}\mu(ds,d\textbf{x}).
\end{equation}
Unless stated otherwise, from now on $ \textbf{b} $ is a drift vector in $\R^{2} $, $ \textbf{W} =(W^{(1)},W^{(2)})$ denotes a bivariate Brownian motion with covariance matrix $ \Sigma\Sigma^{\top} $, and $ \mu, \tilde{\mu} $ are the jump measure and its compensator, respectively. The compensator takes the form $ \tilde{\mu}=dsF(d\textbf{x})$, where $F$ is the L\'evy  measure of $ \textbf{X} $.\par

Due to the independence of the continuous part and the discontinuous (jump part) of a L\'evy process, the analysis of $ \textbf{X} $ canonically splits into the inference on the covariance matrix and the inference on the jump measure $ F $. Our focus on this paper is to investigate an estimator for the co-integrated volatility of $ \textbf{X} $. \par 

We assume a filtered space $ \PS $ supporting two independent standard Brownian motions $ W^{(1)},W^{(3)} $ and two Poisson random measures $ \mu^{(j)} $ for $ j=1,2 $ on $ \R^{2}\times [0,1] $.  Recall that $ W^{(1)}, W^{(2)} $ are correlated with $ d\langle W^{(1)}, W^{(2)} \rangle_{t} \\ =\rho dt$, where $ \rho $ is a constant on $ [0,1] $. We construct $W^{(2)} $ as a linear combination of the two independent Brownian motions so $W^{(2)}_{t}=\rho W^{(1)}_{t}+\sqrt{1-\rho^{2}}W^{(3)}_{t}$. Next we calculate the variances and covariance of $W^{(1)}, W^{(2)} $, we see that the following holds 
\[
Var(W^{(1)}_{t})=\langle W^{(1)}_{t}, W^{(1)} _{t}\rangle =t
\]
\[
Var(W^{(2)}_{t})=\langle W^{(2)}_{t}, W^{(2)}_{t}\rangle =\rho^{2}t+(1-\rho^{2})t=t.
\]
For the covariance we obtain
\[
Cov(W^{(1)}_{t},W^{(2)}_{t})=\langle  W^{(1)}_{t}, W^{(2)}_{t}\rangle =\rho\langle W^{(1)}_{t}, W^{(1)}_{t}\rangle+\sqrt{1-\rho^{2}} \langle W^{(1)}_{t}, W_{t}^{(3)}\rangle=\rho t ;
\]
the last equality holds because of $ W^{(1)}, W^{(3)} $ being independent.
So, without loss of generality we assume that 
\[
\Sigma = \left(\begin{smallmatrix}\sigma^{(1)}& 0\\ \rho\sigma^{(2)} & \sqrt{1-\rho^{2}}\sigma^{(2)} \end{smallmatrix}\right) \quad \mbox{so that} \quad \Sigma\Sigma ^\top=\left(\begin{smallmatrix}(\sigma^{(1)})^{2} & \rho \sigma^{(1)}\sigma^{(2)}\\  \rho\sigma^{(1)}\sigma^{(2)}& (\sigma^{(2)})^{2} \end{smallmatrix}\right)
\] 
where $ \sigma^{(i)}, i=1,2 $ are deterministic. Therefore, the global quadratic variation of $ \textbf{X} $ is given by:
\begin{equation}
\langle X^{(1)}_{t}, X^{(2)}_{t}\rangle=\int_{0}^{t}\rho\sigma^{(1)}\sigma^{(2)} ds +\sum_{s\leq t}\Delta X^{(1)}_{s}\Delta X^{(2)}_{s}
\end{equation}
where the first term is the co-integrated volatility and the second term is the sum of products of simultaneous jumps (called co-jumps). Our target of inference, the co-integrated volatility at time $ 0\leq t \leq 1 $, is 
\begin{equation}
C^{12}_{t}=\int_{0}^{t}\rho \sigma^{(1)}\sigma^{(2)}ds.
\end{equation}

\section{Assumptions} \label{sec:assumptions}
To derive an estimator for co-integrated volatility and then prove minimax bound for this estimator, we need to establish some assumptions regarding the behavior of small jumps and the class of our estimator. In particular, our setup is intrinsically nonparametric and related to the properties of the observed path. We use the following notation for a matrix: $ \|\cdot\|_{\infty} $ is the maximum absolute row sum of the matrix, (i.e. the $ \infty $-norm).
\\

\begin{assumption}{1}{M}\label{as1}
The $ \infty $-norm of the covariance matrix is assumed to be bounded, i.e. $ \|\Sigma \Sigma ^{\top}\|_{\infty} \leq M $. 
\end{assumption}

\begin{assumption}{2}{M}\label{as2}
 $\int_{\R^{2}}\left(1\wedge |x_{1}x_{2}|^{r/2}\right)F(dx_{1},dx_{2})\leq M$ , where  $ r\in [0,2) $. 
\end{assumption}

Notice that Assumption \ref{as2} follows from the classical condition to control the activity of small jumps in two dimensions. Through a trivial calculation 

	\[
	\begin{aligned}
	\int_{\R^{2}}\left(1\wedge |x_{1}x_{2}|^{r/2}\right) F(dx_{1},dx_{2})
	& \leq \int_{\R^{2}}\left(1\wedge |x_{1}^{2}+x_{2}^{2}|^{r/2}\right)F(dx_{1},dx_{2})\\
	&=\int_{\R^{2}}\left(1\wedge ||\textbf{x}||^{r}\right) F(d\textbf{x}) .
	\end{aligned}
	\] 
By using this unconventional Assumption \ref{as2}, we relax the classical condition for small jumps in two dimensions and make our results stronger, since we consider the case of dependent jumps.\par
Assumption \ref{as2}  concerns the behavior of jump components with size smaller or equal to one. By this  assumption we consider the problem of controlling the activity of \textit{co-jumps}, i.e. joint jumps. Below, a co-jump, say at time $ t $, means that both components jump at this time but their jump sizes may \textit{not} be the same.  Ultimately, we are asking that is to say if the small jump components are of finite or infinite variation. This question concerns the behavior of the compensator $ F $, the L\'evy measure, near 0. The major difficulties here come form the possibly erratic behavior of $ F $ near $ 0 $ and the possible dependence between the jump components.  In \Cref{sec:cojumpindex} we describe more detailed the dependence structure of the jump components and the co-jump index activity $ r $.\par

The Blumenthal-Getoor (BG) index allows us to classify the processes from least active to most active, according to the above description. We denote by $ r^{*} $ the BG index for a two-dimensional L\'evy process which satisfies: 

\begin{equation}\label{bgtwo}
	r^{*}=\inf\left\{r \in[0,2):\int_{\R^{2}}\left(1\wedge ||\textbf{x}||^{r}\right) F(d\textbf{x}) <\infty\right\}.
\end{equation}
Note that a stable L\'evy process of index $ \beta \in(0,2)$ satisfies $\int_{\R^{2}}\left(1\wedge ||\textbf{x}||^{r}\right) F(d\textbf{x}) <\infty$ for all $r>\beta $, but not for $ r\leq \beta $. The BG index of a $ \beta $- stable is exactly $ \beta $.\par
The problem of BG index estimation from discrete observations of a L\'evy process has drawn much attention in the literature. In the case of high-frequency data, \cite{ait2011testing} studied the problem of estimating the jump activity index that is defined for any It\^o semimartingale. A consistent estimator for the BG index based on one-dimensional L\'evy processes with low-frequency data was obtained in \cite{belomestny2010spectral}. The interested reader should refer to \cite{belomestny2015estimation}, Section 7 for a detailed review of these results. An extension to time-changed L\'evy processes can be found in \cite{belomestny2012abelian, belomestny2013estimation}.

Now we will test the Assumption \ref{as2} about the boundedness of small co-jumps with some trivial examples. Despite its simple nature, the following example offers significant insight and intuitive understanding into co-jumps with infinite variation.
\begin{ex}\label{indjumps}
	Suppose we have independent jumps in the coordinates and $F\left(d\textbf{x}\right)$ is a L\'evy measure on $\R^{2}$ and $\textbf{x}$ is a vector in $\R^{2}$. Then, $supp (F)\subseteq \left\{\R\times \left\{0\right\}\cup \left\{0\right\}\times \R\right\}$ which means that
	\begin{equation*}
	\begin{aligned}
	\int_{\R^{2}}\left(1\wedge ||\textbf{x}||^{r}\right) F(d\textbf{x})&= \int_{\R^{2}}\left(1\wedge |x_{1}^{2}+x_{2}^{2}|^{r/2}\right)F(dx_{1},dx_{2})\\
	&= \int_{\R}\left(1\wedge|x_{1}|^{r}\right)  F_{1}(dx_{1})\\
	&+\int_{\R}\left(1\wedge |x_{2}|^{r}\right)  F_{2}(dx_{2})<\infty,
	\end{aligned}
	\end{equation*}
	if the marginals of a two-dimensional L\'evy process are finite in the one dimensional case, see the assumption section in \cite{jacod2014remark}. 
\end{ex}
In this example, we notice that 
\begin{equation}
\int_{\R^{2}}\left(1\wedge |x_{1}x_{2}|^{r/2}\right)(\ch_{\{x_{1}=0\}}+\ch_{\{x_{2}=0\}})F(dx_{1},dx_{2})=0,
\end{equation}
since the integrand is always equal to zero. This means that the deterministic error of our estimator is equal to zero, see \Cref{deterministicbou} for further details. This example shows us something more: Whenever we have independent jumps, no matter the choice of $F$, we can always find a control for the activity of small jumps. Even if we have jumps of infinite variation. 
\begin{ex}
	Suppose we have independent jump size distribution, which means that $F(dx)=F_{1}(dx_{1})F_{2}(dx_{2})$, so 
	\begin{equation*}
	\begin{split}
	\int_{B_{1}(0)}|x_{1}x_{2}|^{r/2} F(dx_{1},dx_{2})&=\int_{B_{1}(0)}|x_{1}x_{2}|^{r/2} F_{1}(dx_{1})F_{2}(dx_{2})\\
	&\leq \int^{1}_{-1}\left(\int^{1}_{-1} |x_{1}|^{r/2} F_{1}(dx_{1})\right)|x_{2}|^{r/2} F_{2}(dx_{2}) <\infty
	\end{split}
	\end{equation*}
	if and only if $\int^{1}_{-1} |x_{i}|^{r/2} F_{i}(dx_{i})<\infty$ for $i=1,2$ and the Assumption \ref{as2} holds. 
\end{ex}
\section{Theoretical results}\label{theoretical_results} 
We use standard notation for asymptotic quantities like $ X_{n} = O_{\Pm} (w_{n})$ if $ (X_{n}/w_{n})_{n\geq 1 } $ is stochastically bounded (i.e. bounded in probability or tight). 
We are in a nonparametric setting in which the process $ \textbf{X} $ belongs to the class $ \class $. Let us now define this class.
\begin{defn}\label{class}
For $M>0$ and $ r\in [0,2) $,  we define the class $\class$, the set of all L\'evy processes, satisfying
\begin{equation}\label{assu}
	\|C\|_{\infty}+\int_{\R^{2}}\left(1\wedge |x_{1}x_{2}|^{r/2}\right) F(dx_{1},dx_{2}) \leq M.
\end{equation}

\end{defn}
We adapt an estimator proposed by \cite{jacod2014remark}. Specifically, we let $\textbf{X}$ be a two-dimensional L\'evy process with characteristic triplet $ (\textbf{b}, C, F) $. Let us remember that we are in a high-frequency setting and the consecutive time between two observations is $\frac{1}{n}$. The characteristic function of $ \textbf{X}_{1/n} $ is given by:
\begin{equation}\label{char}
\begin{aligned}
\phi_{n}(\textbf{u}_{n})=\exp \bigg\{&\frac{1}{n}\bigg(i\left\langle \textbf{u}_{n},\textbf{b}\right\rangle-\frac{\left\langle C \textbf{u}_{n},\textbf{u}_{n}\right\rangle}{2}+\int_{\R^{2}}\big(\exp(i\left\langle \textbf{u}_{n}, \textbf{x}\right\rangle\big)\\
&-1-i\left\langle \textbf{u}_{n}, \textbf{x}\right\rangle \ch_{\left\{||\textbf{x}||_{\R^{2}}\leq 1\right\}}\big)F(d\textbf{x})\bigg)
\bigg\},
\end{aligned}
\end{equation}
where $C=\left(\begin{smallmatrix}C^{11} & C^{12}\\C^{21}&C^{22} \end{smallmatrix}\right)$ is the covariance matrix and $\textbf{u}_{n}=(U_{n}, U_{n})$. In the same vein, we define the characteristic function $ \phi_{n}(\tilde{\textbf{u}}_{n} )$ where $\tilde{\textbf{u}}_{n}=(U_{n}, -U_{n})$. Here, we focus on estimating the characteristic function on the diagonal of first and fourth quadrant for sake of simplicity to our calculations. The results still hold even when we move away from the diagonal. Following a trivial calculation we get that
\[
\left\langle C \ut, \ut \right\rangle= C^{11}U^{2}_{n}+C^{22}U^{2}_{n}+2C^{12}U^{2}_{n}
\]
\[
\left\langle C \uti, \uti \right\rangle= C^{11}U^{2}_{n}+C^{22}U^{2}_{n}-2C^{12}U^{2}_{n}.
\]
So, the covariance is given by
\begin{equation} \label{tr}
C^{12}=\frac{\left\langle C \ut , \ut \right\rangle-\left\langle C \uti, \uti \right\rangle}{4U^{2}_{n}}.
\end{equation}
We consider, based on the observations, the empirical characteristic function of the increments, at each stage n:
\begin{equation}\label{empi}
\fit=\frac{1}{n}\sum^{n}_{j=1}e^{i\left\langle \textbf{u}_n, \Delta^{n}_{j} \textbf{X}\right\rangle} \qquad \mbox{$ \textbf{u}_{n}\in\R^{2} $}. 
\end{equation}
Similarly, we consider the empirical characteristic function$ \f $.
Based on the trivial calculation (\ref{tr}), we now define the \textit{spectral estimator}
\begin{equation}\label{esti}
\widehat{C}_{n}^{12}(U_{n})=\frac{n}{2U^{2}_{n}}\left(\log \vert\hat {\phi}_{n}(\uti)\vert\mathds{1}_{\left\{\hat{\phi}_{n}(\uti)\neq 0\right\}}- \log \vert\hat {\phi}_{n}(\ut)\vert\mathds{1}_{\left\{\hat{\phi}_{n}(\ut)\neq 0\right\}}\right).
\end{equation}
The first result of this paper is the following theorem. 
\begin{theorem}\label{upbo}
	Let $ \textbf{X}  $ belong to the class $ \class $. Assume $M>0$ and $r\in[0,2)$, then as $ n\to \infty $ the family of estimators $\widehat{C}_{n}^{12}(U_{n})$ with 
	\[U_{n}=
	\begin{cases}
	\sqrt{n} \quad & \mbox{if $r\leq 1$}\label{vec}\\
	\sqrt{(r-1)n\log n}/\sqrt{M} \quad & \mbox{if $r>1$}
	\end{cases}
	\]
	satisfies $|\widehat{C}_{n}^{12}(U_{n})-C^{12}|= O_{\Pm}(w_{n})$ within the class $\class$ where
	\begin{equation}\label{rates}
		w_{n}=
		\begin{cases}
			1/\sqrt{n} \quad & \mbox{if $r\leq 1$}\\
			(n\log n)^{\frac{r-2}{2}}\quad & \mbox{if $r > 1$}.
		\end{cases}
	\end{equation}
	Particularly, we have that the family of estimators $\widehat{C}_{n}^{12}$ is consistent with the theoretical co-integrated volatility $C^{12}$ with the exact rates of convergence $w_{n}$.
\end{theorem}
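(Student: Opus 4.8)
The plan is to decompose the error $\widehat{C}_{n}^{12}(U_{n})-C^{12}$ into a \emph{deterministic (bias) part} and a \emph{stochastic part}, and to control each separately via the choice of $U_{n}$. Writing $\psi_{n}(\textbf{u}):=\log|\phi_{n}(\textbf{u})|$ for the true log-modulus of the characteristic function of $\textbf{X}_{1/n}$, identity~(\ref{tr}) becomes, after taking real parts of the Lévy--Khintchine exponent,
\[
\frac{n}{2U_{n}^{2}}\bigl(\psi_{n}(\uti)-\psi_{n}(\ut)\bigr)=C^{12}+\frac{n}{2U_{n}^{2}}\cdot R_{n}(U_{n}),
\]
where $R_{n}(U_{n})$ collects the jump contributions, namely the difference of the integrals $\tfrac1n\int_{\R^{2}}\bigl(\cos\langle\uti,\textbf x\rangle-\cos\langle\ut,\textbf x\rangle\bigr)F(d\textbf x)$ (the linear $\ch_{\{\|\textbf x\|\le1\}}$ terms and the $C^{11},C^{22}$ diagonal terms cancel in the difference). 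First I would bound this deterministic term: since $\cos(a+b)-\cos(a-b)=-2\sin a\sin b$ with $a=U_{n}x_{1}$-type and $b=U_{n}x_{2}$-type arguments, the integrand is $\lesssim 1\wedge (U_{n}^{2}|x_{1}x_{2}|)$, and splitting at $|x_{1}x_{2}|\gtrless U_{n}^{-2}$ and invoking Assumption~\ref{as2} gives $|R_{n}(U_{n})|\lesssim \tfrac{M}{n}U_{n}^{r}$ (up to the usual interpolation; for $r=0$ it is $O(1/n)$). Hence the bias is $O(U_{n}^{r-2})$. This is the place where the co-jump index $r$ and Assumption~\ref{as2} enter, and it is the conceptual heart of the argument, though the estimate itself is short.

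Next I would control the stochastic part, $\dfrac{n}{2U_{n}^{2}}\bigl[(\log|\widehat\phi_{n}(\uti)|-\log|\phi_{n}(\uti)|)-(\log|\widehat\phi_{n}(\ut)|-\log|\phi_{n}(\ut)|)\bigr]$. The empirical characteristic function $\widehat\phi_{n}(\textbf u)$ is an average of $n$ i.i.d.\ bounded terms with mean $\phi_{n}(\textbf u)$, so $\widehat\phi_{n}(\textbf u)-\phi_{n}(\textbf u)=O_{\Pm}(1/\sqrt n)$; the subtle point is that $|\phi_{n}(\textbf u)|=\exp(\psi_{n}(\textbf u))$ can be exponentially small when $U_{n}$ is large, since $\psi_{n}(\ut)\approx -\tfrac{1}{2n}\langle C\ut,\ut\rangle$ is of order $-U_{n}^{2}/n$. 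I would therefore write $\log|\widehat\phi_{n}|-\log|\phi_{n}|=\log\bigl(1+(\widehat\phi_{n}-\phi_{n})/\phi_{n}\bigr)$ modulo a real-part manipulation, and use a linearization valid on the event $\{|\widehat\phi_{n}-\phi_{n}|\le \tfrac12|\phi_{n}|\}$, which by Bernstein/Hoeffding has probability $\to1$ precisely when $|\phi_{n}(\textbf u)|^{-1}/\sqrt n\to0$, i.e.\ when $U_{n}^{2}/n-\tfrac12\log n\to-\infty$. This is why the threshold $U_{n}^{2}\asymp (r-1)n\log n/M$ appears: it is the largest frequency for which the denominator $|\phi_{n}|$ does not kill the Monte-Carlo error, and the constant $(r-1)/M$ is tuned so that the exponential decay rate $U_{n}^2/n \le \tfrac{(r-1)\log n}{M}$ is beaten by the variance-of-the-log bound. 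On that event the stochastic term is $O_{\Pm}\!\bigl(\tfrac{n}{U_{n}^{2}}\cdot\tfrac{1}{\sqrt n\,|\phi_{n}(\textbf u)|}\bigr)$, and plugging $|\phi_{n}|\gtrsim n^{-(r-1)/2}$ and $U_{n}^{2}\asymp n\log n$ yields $O_{\Pm}\bigl((n\log n)^{(r-2)/2}\bigr)$ for $r>1$, while for $r\le1$ one takes $U_{n}=\sqrt n$, $|\phi_{n}|\gtrsim$ const, and the term is $O_{\Pm}(1/\sqrt n)$.

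Finally I would combine the two bounds: for $r\le 1$, bias $=O(U_{n}^{r-2})=O(n^{(r-2)/2})=O(1/n^{(2-r)/2})\le O(1/\sqrt n)$ is dominated by the stochastic $O_{\Pm}(1/\sqrt n)$; for $r>1$, both bias and stochastic term are of the common order $(n\log n)^{(r-2)/2}$, confirming~(\ref{rates}) and showing these are the exact rates rather than mere upper bounds. Consistency follows since $w_{n}\to0$ in both regimes. The main obstacle I anticipate is the stochastic step: making the linearization of $\log|\widehat\phi_{n}|$ rigorous uniformly while the true modulus $|\phi_{n}(\textbf u_{n})|$ degenerates requires a careful concentration inequality for the complex empirical characteristic function together with a truncation onto the high-probability event, and one must track the interplay between the $1/U_{n}^{2}$ prefactor, the $1/\sqrt n$ Monte-Carlo rate, and the $|\phi_{n}|^{-1}\asymp n^{(r-1)/2}$ blow-up to land on exactly the claimed exponent.
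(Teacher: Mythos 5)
Your proposal follows essentially the same route as the paper: the same bias/stochastic decomposition of $\widehat{C}_{n}^{12}(U_{n})-C^{12}$, the same bound on the deterministic term via the trigonometric estimate $1\wedge U_{n}^{2}|x_{1}x_{2}|$ interpolated against Assumption \ref{as2} to get $O(U_{n}^{r-2})$, and the same treatment of the stochastic term by linearizing $\log|\widehat{\phi}_{n}|$ on a high-probability event and trading the $1/\sqrt{n}$ Monte-Carlo error against the blow-up of $|\phi_{n}(\textbf{u}_{n})|^{-1}$, with $U_{n}$ tuned exactly as in the paper (the paper restricts to the event $\{|V_{n}|\leq n^{-r/4}\}$ via Chebyshev rather than your Hoeffding-type event, a cosmetic difference). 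The argument is correct and yields the claimed rates $w_{n}$.
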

$ \autoref{upbo} $ gives us an upper bound for the family of our estimators $\widehat{C}_{12}^{n} $. In \Cref{sec:upperbound}, we give a proof of the upper bound for the family of our estimators $ \widehat{C}_{12}^{n} $. Let us finally show that on the class $ \class $ the rate $ w_{n} $ (\ref{rates}) can be achieved exactly and thus constitutes the exact minimax optimal rate.
\begin{theorem}\label{lobo}
	Let $\textbf{X}$ belong to $\class$, $r\in[0,2)$ and $M>0$. Then there are constants $ A, B>0$ such that
	\[
	\liminf_{n\to \infty}\inf_{\widehat{C}_{n}^{12}}\sup_{C^{12}\in \class}\Pm[\text{d}(\widehat{C}_{n}^{12}, C^{12})> Aw_{n}]\geq B > 0,
	\]
	where $\widehat{C}_{n}^{12}$ is any estimator for the co-integrated volatility and $d$ is the euclidean distance on $\R^{2}$.
\end{theorem}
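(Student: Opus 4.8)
The plan is to reduce the estimation problem to a two-point (or multiple-point) hypothesis testing problem via the standard Le Cam / Ingster scheme, and to exploit the fact that the spectral statistic only sees the characteristic function through $|\phi_n(\textbf{u}_n)|$ and $|\phi_n(\tilde{\textbf{u}}_n)|$. Concretely, I would construct two L\'evy processes $\textbf{X}^{(0)}$ and $\textbf{X}^{(1)}$, both lying in the class $\class$, whose co-integrated volatilities $C^{12}_0$ and $C^{12}_1$ differ by an amount of order $w_n$, but whose laws on the observed increments $(\Delta^n_j\textbf{X})_{j=1}^n$ are statistically indistinguishable, in the sense that the Hellinger distance (or total variation, or $\chi^2$-divergence) between the $n$-fold product measures stays bounded away from $1$. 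Once such a pair exists, the classical reduction (e.g. Tsybakov, Theorem 2.2) yields exactly the asserted $\liminf$ with explicit constants $A = \tfrac12|C^{12}_0 - C^{12}_1|/w_n$ and $B$ depending on the divergence bound.

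The heart of the construction is the choice of the two L\'evy measures. For $r\le 1$ the rate is parametric, $w_n = 1/\sqrt n$, and here a standard device works: keep the jump part fixed (say zero, or a fixed finite-variation co-jump measure respecting Assumption \ref{as2}) and simply perturb the Gaussian covariance matrix, $C^{12}_1 = C^{12}_0 + c/\sqrt n$, with $c$ small enough that Assumption \ref{as1} still holds; the product-measure Hellinger distance between two $n$-dimensional Gaussians with covariances differing by $O(1/\sqrt n)$ in each coordinate, each observed at scale $1/n$, is $O(1)$, which is exactly the parametric boundary case. For $r>1$ the interesting regime — I would instead fix $C$ and perturb the L\'evy measure near the origin: add to $F_0$ a small amount of co-jump mass supported on $\{|x_1x_2|\text{ small}\}$, calibrated so that (i) the extra contribution to $\int (1\wedge|x_1x_2|^{r/2})F(d\textbf{x})$ keeps $\textbf{X}^{(1)}$ inside $\class$, and (ii) this added mass changes the ``effective'' co-integrated volatility as seen by the estimator — i.e. the deterministic bias term analysed in the upper-bound section (referenced as \Cref{deterministicbou}) — by order $(n\log n)^{(r-2)/2}$. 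The tuning parameter is the truncation level $\varepsilon_n \to 0$ of the added jumps together with their intensity; matching the $\log n$ factor forces $\varepsilon_n$ to be polynomially small in $n$ with a logarithmic correction, precisely as in the choice $U_n \asymp \sqrt{(r-1)n\log n}$ in Theorem \ref{upbo}.

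The main obstacle is step (ii): controlling the divergence between the two product laws while simultaneously guaranteeing the co-integrated volatilities (or the estimator's target functional) are separated by the full rate $w_n$. One cannot compute the Hellinger distance between two compound-Poisson-plus-Gaussian laws in closed form, so I would bound it by a second-order (Kakutani / $\chi^2$) expansion: $H^2(\Pm_0^{\otimes n},\Pm_1^{\otimes n}) \le n\, \chi^2(\Pm_{1,1/n}, \Pm_{0,1/n})$, and then estimate $\chi^2$ of the single-increment laws using the Fourier representation and the fact that the perturbation of the L\'evy exponent is $O(\tfrac1n \int |e^{i\langle u,x\rangle}-1-\cdots|\,\delta F(d\textbf{x}))$, which for jumps truncated at $\varepsilon_n$ is of order $\tfrac1n \varepsilon_n^{2-r}$ up to logarithms. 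Balancing $n \cdot (\tfrac1n \varepsilon_n^{2-r})^2 = O(1)$ against the bias requirement $\varepsilon_n^{2-r} \gtrsim w_n$ is what pins down the rate; the delicate point is that the added co-jump mass must live on the thin set $\{|x_1x_2|\le \varepsilon_n^2\}$ rather than on a full ball, so the usual one-dimensional estimates must be redone with the product structure, and one must check that the perturbation does not also inflate the individual marginal BG indices beyond what the class allows. I would also need the easy remark that, since the estimator depends on the data only through $(\Delta^n_j\textbf{X})_j$, it suffices to separate the laws of the increments, and that $d(\cdot,\cdot)$ being the Euclidean distance on $\R^2$ applied to real-valued targets reduces to $|\cdot|$, so no extra geometry enters.
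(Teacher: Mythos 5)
Your overall scheme (two-point reduction, separation of order $w_n$, bounded divergence between product laws) is the right frame, and your treatment of the regime $r\leq 1$ by perturbing only the Gaussian covariance is essentially what the paper does. But the construction you propose for the critical regime $r>1$ has a genuine gap: you say you would \emph{fix} $C$ and perturb only the L\'evy measure, measuring the separation through the ``effective'' co-integrated volatility seen by the estimator (the deterministic bias). The estimand in \autoref{lobo} is the true Brownian covariation $C^{12}$, not an effective one; if both hypotheses carry the same Gaussian matrix, then $C^{12}_0=C^{12}_1$, the two parameters are not separated at all, and Tsybakov's two-point bound yields nothing. Conversely, perturbing only $C^{12}$ by $2w_n$ without any jump compensation also fails, since the per-increment Kullback--Leibler divergence is of order $w_n^2$ and $n\,w_n^2=n^{r-1}(\log n)^{r-2}\to\infty$ for $r>1$, so the product laws become perfectly distinguishable. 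The whole point of the paper's construction (following the Jacod--Rei{\ss} device) is to do \emph{both at once}: the two hypotheses have Gaussian covariances differing by $2w_n$ in the off-diagonal entry (this gives the separation in the actual target), while one of them receives an additional co-jump measure, supported on the diagonal $\{x_1=x_2\}$ and built through a Fourier design (density $|H_n(x)|/x^2$ with $H_n=\mathcal{F}h_n$, $h_n\equiv w_n$ on $[-U_n,U_n]$, see \Cref{propo}), whose characteristic exponent equals $w_nU^2/2$ for all $|U|\leq U_n\asymp w_n^{1/(r-2)}\asymp\sqrt{n\log n}$. Thus the extra Gaussian covariance of one hypothesis is exactly mimicked by small co-jumps of the other at all relevant frequencies, $\eta_n(U)=0$ for $|U|\leq U_n$, and the total variation is controlled by the high-frequency tails via Plancherel (\Cref{totava}). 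Your ``add a small amount of co-jump mass on $\{|x_1x_2|\ \text{small}\}$'' calibrated only through its contribution to $\int(1\wedge|x_1x_2|^{r/2})F$ does not achieve this cancellation, and your balance $n(\varepsilon_n^{2-r}/n)^2=O(1)$ versus $\varepsilon_n^{2-r}\gtrsim w_n$ does not reproduce the rate; it is precisely the Assumption \ref{as2} budget that limits how far in frequency the Gaussian-mimicking can be pushed, which pins down $U_n$ and hence $w_n=(n\log n)^{(r-2)/2}$.

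A secondary, fixable point: your divergence bookkeeping (Hellinger tensorization plus a $\chi^2$ or second-order bound on the single-increment laws) is a legitimate alternative to the paper's route ($\mathrm{TV}\leq 2n\int|k_n|$, then Cauchy--Schwarz and Plancherel reducing everything to $n^2\int|\eta_n|^2$ and $n^2\int|\partial_1\eta_n|^2$), but it only becomes workable after the low-frequency cancellation above, because otherwise the single-increment divergence is dominated by the uncompensated $w_nU^2/n$ term over the whole Gaussian frequency range $|U|\lesssim\sqrt{n}$ and does not vanish after multiplication by $n$.
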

\autoref{lobo} gives us a lower bound for the family of our estimators $ \widehat{C}_{n}^{12} $ within the class $ \class $. The rates $w_{n}$ (\ref{rates}) for estimating $C^{12}$, namely the co-integrated volatility at time $ t=1 $ are optimal in a minimax sense. In \Cref{sec:lowerbound} we prove this result.

\section{Co-jump index activity}\label{sec:cojumpindex}
We are interested in bounding from below the co-jump activity in the case that at least one of the jump components is of infinite variation. Each component $ X^{(i)} $ of a two-dimensional L\'evy process has its own index activity $ r_{i}$ for $ i=1,2 $. In the following we will describe the method for bounding from below the index activity of co-jumps. The BG index of a L\'evy process depends only on the L\'evy measure $ F$. $ r $ is an index taking care of positive and negative jumps, for simplicity's sake but without loss of generality we develop our method for the case in which the L\'evy measure is one-sided, i.e. $ X^{(i)} $ only makes positive jumps. Thus, $ r $  will be influenced by the dependent structure between the jump components.  \par

We will use a L\'evy copula to describe this dependency. The concept of L\'evy copula allows us to characterize in a time-independent scheme the dependence structure of the pure jump part of a L\'evy process. Here, we use the L\'evy copula, which permits a range from a dependent to a total independent framework. For the definition and concepts of independence and total positive dependence copula we refer to \cite{kallsen2006characterization}. The next definition is taken from \cite{mancini2017truncated}.
\begin{definition}\label{copula}
	The occurrence of joint jumps in $ (X^{(1)}, X^{(2)}) $ is described by the following tail integrals
	\[
	U(x_{1}, x_{2})=F_\gamma\big([x_{1},+\infty) \times [x_{2}, +\infty)\big)=C_{\gamma}\big(U_{1}(x_{1}), U_{2}(x_{2})\big), \quad \mbox{ $ x_{1}, x_{2}\in [0, \infty] $}
	\]
	where $ C_{\gamma}: [0, \infty]^{2}\to [0,\infty] $ is a L\'evy copula of the form
	\[
	C_{\gamma}(u_{1}, u_{2})=\gamma C_{\bot}(u_{1}, u_{2})+(1-\gamma)C_{\parallel}(u_{1}, u_{2}),
	\]
	where $ C_{\bot} = u_{2}\ch(u_{1}=\infty) +u_{1}\ch(u_{2}=\infty)$ is the independence copula, $ C_{\parallel}(u_{1}, u_{1})=u_{1}\wedge u_{2} $ is the total positive dependence copula and $ \gamma  $ varies in $ [0,1] $. 
\end{definition}
The following remark gives us some clarifications on the definition of the above L\'evy copula.
\begin{remark}
	The marginal tails $ U_{i} $ are defined on $ [0, \infty]$, the joint tail is defined on $ [0, \infty]^{2} $. $ u_{1}, u_{2} $ stands for $ U_{1}(x_{1}), U_{2}(x_{2}) $, and  $ (u_{1}, u_{2}) = (+\infty, +\infty) $ is allowed: both $ U_{i}(x_{i}) $ could be $ \infty $, namely when both $ x_{i}= 0 $. In that case $ U(x_{1}, x_{2}) = C_{\gamma}(U_{1}(x_{1}), U_{2}(x_{2}))$ is $ +\infty $, and $ C_{\gamma}(\infty, \infty) = 0$. $ C_{\gamma} $ is a L\'evy copula because it is a convex combination of two L\'evy copulas, i.e. $ C_{\gamma } $ is a $ 2- $increasing, grounded and with uniform margins, because $ C_{\bot} $ and $ C_{\parallel} $
	are such.  $ C_{\gamma} (u_{1}, u_{2}) $ is not a tail integral, it has different properties, for instance $ C_{\gamma}(u_{1}, +\infty) = u_{1}$, while for any tail $ U $ we have $ U(x_{1}, +\infty) = 0 $ Finally, when $ \gamma = 0  $ jump components are totally dependent while when $ \gamma =1 $ the opposite (ask Jacob).
	
\end{remark}
We observe that the index activity of co-jumps is bounded from below by the harmonic mean.

\begin{lemma}\label{coj}
	Suppose that Assumption \ref{as2} holds. Let $ X^{(i)} $ be an one-sided $r_{i}$-stable L\'evy process for $ i=1,2 $ with positive jumps. Given $ r_{1}, r_{2} \in [0,2)$ assume without loss of generality $ r_{1}\leq r_{2}$ and  $ r_{2}\geq 1 $. We assume either complete dependent or independent jumps. Then, we have that 
	\[
	r> \frac{2r_{1}r_{2}}{r_{1}+r_{2}}\geq r_{1}\wedge r_{2} 
	\]
	where $ r $ is the index activity of co-jumps.
\end{lemma}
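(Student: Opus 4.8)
The plan is to reduce the assertion to a one‑dimensional power‑counting estimate on the co‑jump integral
\[
J(r)=\int_{\R^{2}}\left(1\wedge |x_{1}x_{2}|^{r/2}\right)F(dx_{1},dx_{2}),
\]
and then to read off the bound on $r$ from the fact that, since $X\in\class$, Assumption~\ref{as2} forces $J(r)\le M<\infty$. The inequality $\frac{2r_{1}r_{2}}{r_{1}+r_{2}}\ge r_{1}\wedge r_{2}$ is immediate: $r_{1}\le r_{2}$ gives $\frac{2r_{2}}{r_{1}+r_{2}}\ge 1$, hence $\frac{2r_{1}r_{2}}{r_{1}+r_{2}}=r_{1}\cdot\frac{2r_{2}}{r_{1}+r_{2}}\ge r_{1}=r_{1}\wedge r_{2}$. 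In the independent case $C_{\gamma}=C_{\bot}$ the L\'evy measure is supported on the two axes, exactly as in Example~\ref{indjumps}, so the integrand $|x_{1}x_{2}|^{r/2}$ vanishes $F$‑a.e.\ and $J(r)\equiv 0$; the substantive case is therefore the totally dependent copula $C_{\gamma}=C_{\parallel}$, which we treat next.

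Since $X^{(i)}$ is a one‑sided $r_{i}$‑stable L\'evy process with positive jumps, its marginal tail integral is $U_{i}(x)=c_{i}x^{-r_{i}}$ for $x>0$, so the first genuine step is to locate the support of the bivariate L\'evy measure $F$ built, via Definition~\ref{copula}, from the total positive dependence copula $C_{\parallel}(u_{1},u_{2})=u_{1}\wedge u_{2}$, for which $F\big([x_{1},\infty)\times[x_{2},\infty)\big)=U_{1}(x_{1})\wedge U_{2}(x_{2})$. If $U_{2}(x_{2})>U_{1}(x_{1})$ then $F\big([x_{1},\infty)\times[x_{2},\infty)\big)=U_{1}(x_{1})=F\big([x_{1},\infty)\times[0,\infty)\big)$, so $F$ puts no mass on $[x_{1},\infty)\times[0,x_{2})$; letting $x_{2}\uparrow U_{2}^{-1}(U_{1}(x_{1}))$ and arguing symmetrically in the other coordinate shows $F$ is concentrated on the curve $\{(x_{1},x_{2}):U_{1}(x_{1})=U_{2}(x_{2})\}$, i.e.\ $x_{2}=\kappa\,x_{1}^{r_{1}/r_{2}}$ with $\kappa=(c_{2}/c_{1})^{1/r_{2}}$. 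Parametrising this graph by $x_{1}$, the push‑forward of $F$ to the first coordinate is the first marginal L\'evy measure, with tail $U_{1}$, i.e.\ $c_{1}r_{1}x_{1}^{-1-r_{1}}\,dx_{1}$.

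It then remains to evaluate $J(r)$. On the curve $x_{1}x_{2}=\kappa\,x_{1}^{(r_{1}+r_{2})/r_{2}}$, so
\[
J(r)=\int_{0}^{\infty}\Big(1\wedge\kappa^{r/2}x_{1}^{\,r(r_{1}+r_{2})/(2r_{2})}\Big)\,c_{1}r_{1}x_{1}^{-1-r_{1}}\,dx_{1}.
\]
The large‑$x_{1}$ part contributes $\int_{1}^{\infty}c_{1}r_{1}x_{1}^{-1-r_{1}}dx_{1}<\infty$ for every $r$; near $0$ the integrand is $\asymp x_{1}^{\,r(r_{1}+r_{2})/(2r_{2})-1-r_{1}}$, which is integrable if and only if $\frac{r(r_{1}+r_{2})}{2r_{2}}-r_{1}>0$, that is if and only if $r>\frac{2r_{1}r_{2}}{r_{1}+r_{2}}$, the integral diverging at equality (there the integrand is $\asymp 1/x_{1}$). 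Since Assumption~\ref{as2} gives $J(r)\le M<\infty$, we conclude $r>\frac{2r_{1}r_{2}}{r_{1}+r_{2}}$, and together with the first paragraph this is the claim. (The standing hypothesis $r_{2}\ge 1$ is used only to place the resulting $r$ in the infinite‑variation regime relevant to the rate theorems, not in this computation.)

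I expect the main obstacle to be the second step: rigorously extracting from the total positive dependence copula that $F$ lives on the one‑dimensional curve $\{U_{1}(x_{1})=U_{2}(x_{2})\}$ and that the one‑dimensional intensity of this carried measure is exactly the marginal tail $U_{1}$. This requires a careful inclusion–exclusion argument with tail integrals together with the strict monotonicity and continuity of the stable tails $U_{i}$; everything afterwards is routine power counting. Secondary points to handle are the mixed copulas $C_{\gamma}$, $\gamma\in(0,1)$ — for which the co‑jump integral is $(1-\gamma)$ times the one just computed, so the same threshold persists — the degenerate boundary case $r_{1}=0$, and a precise statement of the (vacuous) conclusion in the independent case, where $J(r)\equiv 0$ imposes no restriction.
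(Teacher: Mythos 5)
Your proposal is correct and follows essentially the same route as the paper: reduce to the totally dependent copula (the independent part contributes zero), identify that the Lévy measure then lives on the curve $U_{1}(x_{1})=U_{2}(x_{2})$, i.e.\ $x_{2}\asymp x_{1}^{r_{1}/r_{2}}$, parametrize by $x_{1}$ with the marginal stable intensity, and power-count near zero to get finiteness iff $r>\tfrac{2r_{1}r_{2}}{r_{1}+r_{2}}$. Your tail-integral argument for the support of $C_{\parallel}$ is just a slightly more explicit version of the paper's "completely positively monotonic" step, so no substantive difference.
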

\begin{proof}
	Each $ X^{(i)}$ is following a $r_{i}$-stable L\'evy process with L\'evy measure 
	\[
	F^{(i)}(dx_{i})=c_{i}x^{-1- r_{i}}_{i}\ch (x_{i}>0)dx_{i}
	\]
	for each $ i=1,2 $. We assume without loss of generality that $ c_{1}\leq c_{2} $. We denote by 
	\[
	U_{i}(x_{i}):=F^{(i)}\big([x_{i}, +\infty)\big)= c_{i}\frac{x_{i}^{-r_{i}}}{r_{i}} \quad \mbox{$ x_{i}\in [0, \infty] $}
	\]
	the tail integral of the marginal L\'evy measure $ F^{(i)} $. Note that $ r_{i} $ is the BG index of $ X^{(i)} $.  \par 
	The independent jumps have sizes of either $ (x_{1}, 0) $ or $ (0, x_{2}) $. This means that we have jumps only on the Cartesian axes. The independent copula regulates such jumps. On the other hand, the complete dependent jumps are regulated by the dependent copula; their size falls into the point $ (x_{1}, x_{2}) $. The complete dependent jumps are completely positively monotonic, i.e. there exists a strictly increasing and positive function $ f $ such that $ \forall t>0 $, $ \Delta X_{t}^{(2)}=f(\Delta X_{t}^{(1)}) $. This means that when $ x_{1} $ is a jump realisation so there is a realisation $ x_{2} $ such as $ x_{2}=f(x_{1}) $, then $ x_{1} $ is interpreted as the first component of the joint jump. In fact, the sizes $ (x_{1}, x_{2}) $ are supported by the graph $ x_{2}=f(x_{1}) $. For the dependent copula we need the minimum between $ U_{1}(x_{1}) $ and $U_{2}(x_{2}) $, which is attained when $ U_{1}(x_{1})=U_{2}(x_{2}) $. Hence, the graph $ x_{2}=U_{2}^{-1}(U_{1}(x_{1})) $ supports the joint jumps. \par
	In our case we assume one-sided $ r_{i} $-stable processes, which means that the union graph of the joint jumps is given by $ x_{2}=\left(\frac{c_{1}r_{2}}{r_{1}c_{2}} \right)^{-1/r_{2}}\cdot x_{1}^{r_{1}/r_{2}}$. We denote by $ F_{\gamma} $ the L\'evy measure in terms of the L\'evy copula, using the Definition \ref{copula}. Therefore,
	\begin{equation}\label{convexcop}
	\begin{aligned}
	F_{\gamma}(d\textbf{x})&=(1-\gamma)C_{\parallel}\big(U_{1}(x_{1}), U_{2}(x_{2})\big) +\gamma C_{\bot}\big(U_{1}(x_{1}), U_{2}(x_{2})\big).
	\end{aligned}
	\end{equation}
	Observe that $  \int 1\wedge (x_{1} x_{2})^{r/2}dC_{\bot}(U_{1}(x_{1}), U_2(x_{2})) = 0$, since the independent copula regulates the jumps on the axes. Inserting (\ref{convexcop}) into Assumption \ref{as2}, it turns out that for $ \epsilon  $ smaller than $ 1 $, we get
	\begin{equation}
	\begin{aligned}
	\int_{0\leq x_{1}, x_{2}\leq \epsilon}(x_{1} x_{2})^{r/2}F_{\gamma}(dx_{1}, dx_{2})&= \int _{0\leq x_{1}, x_{2}\leq \epsilon} \big(x_{1} x_{2}\big)^{r/2}dC_{\gamma}\big(U_{1}(x_{1}), U_{2}(x_{2})\big)\\
	&=(1-\gamma)\int _{0\leq x_{1}, x_{2}\leq \epsilon}\big(x_{1} x_{2}\big)^{r/2}dC_{\parallel}\big(U_{1} (x_{1}), U_{2}(x_{2})\big).
	\end{aligned}
	\end{equation}
	The first equality holds because of the fact that the integrand is always equally to zero in case of independent jumps. For sake of simplicity, we assume $ \gamma = 0 $, i.e. totally dependent jumps. We assume that the jump sizes $ (x_{1}, x_{2}) $ falls into the interval $ (0, \epsilon) $ for sufficiently small $ \epsilon>0 $.
	Remember $ r_{1}\leq r_{2} $ and $ c_{1}\leq c_{2} $, then we have $ U_{1}(\epsilon)\leq U_{2}(\epsilon) $, which implies $ \epsilon \geq U_{2}^{-1}\big( U_{1}(\epsilon)\big)=f(\epsilon)$. Since we want to bind $x_{1}\leq \epsilon $ and $ x_{2}=f(x_{1})\leq \epsilon $, this gives us $ x_{1}\leq f^{-1}(\epsilon)\wedge \epsilon=\epsilon $. Hence,
	\begin{equation}\label{indexco}
	\begin{aligned}
	\int_{0\leq x_{1}\leq \epsilon}\big(x_{1} \cdot f(x_{1})\big)^{r/2}dU_{1}(x_{1})&=\int_{0\leq x_{1}\leq \epsilon} \big(x_{1} \cdot f(x_{1})\big)^{r/2}c_{1}x_{1}^{-1-r_{1}}d x_{1}\\
	&=C^{r/2}\cdot c_{1}\int _{0\leq x_{1}\leq \epsilon} \Bigg( x_{1}^{\frac{r_{1}}{r_{2}}+1}\Bigg)^{r/2} x_{1}^{-1-r_{1}}dx_{1}
	\end{aligned}
	\end{equation}
	where $ C= \Big(\frac{c_{1}r_{2}}{r_{1}c_{2}}\Big)^{-\frac{1}{r_{2}}} $. \par
	In light of the above calculations, in order for the integral in (\ref{indexco}) not to be divergent we need $ \Big(\frac{r_{1}}{r_{2}}+1\Big)\frac{r}{2}-1-r_{1}>-1$, which means that $ r>\frac{2r_{1}r_{2}}{r_{1}+r_{2}} $. We observe that $ r $, the index activity of co-jumps, is at least the harmonic mean of the indices $ r_{1}, r_{2} $. In addition, $\frac{2r_{1}r_{2}}{r_{1}+r_{2}}\geq r_{1}$, since we assume $ r_{1}\leq r_{2} $. To conclude, the Blumenthal-Getoor (BG) index of the co-jump activity will be bounded from below by
	\begin{equation}\label{bgin}
	\quad r>\frac{2r_{1}r_{2}}{r_{1}+r_{2}}\geq r_{1} \wedge r_{2}.
	\end{equation}
	The proof now is complete.
\end{proof}
We see here that the higher the activity of one jump component, the higher the activity of co-jumps. \par
Next we proceed to the proof of the upper bound \autoref{upbo} using a spectral estimate for the co-integrated volatility. Given the fact that we know an estimate for the \textit{integrated volatility $ \widehat{IV} $}, we should consider a straightforward estimate for \textit{co-integrated volatility}. By polarization,
$
\widehat{IV}\left(X^{(1)}+X^{(2)}\right)/2-\widehat{IV}\left(X^{(1)}\right)/2-\widehat{IV}\left(X^{(2)}\right)/2, 
$
is a possible estimator for the co-integrated volatility. However, we refrain from using this estimate because the rates of convergence are slower than following the procedure as in \Cref{sec:upperbound}. Let us illustrate this argument with an example.
\begin{ex}\label{example}
	Let $ (X_{t}) \equiv (X_{t}^{(1)}, X_{t}^{(2)}) $ be a L\'evy process with characteristic triplet $ (0, 0, F(d\textbf{x})) $, i.e., without a Gaussian part. We assume its components are independent $ r_{i} -$ stable L\'evy processes for $ i=1,2 $ such that $ 0\leq r_{1}\leq r_{2} <2 $ and $ r_{2}\geq 1 $. Using Lemma 4.1 from \cite{kallsen2006characterization} F is supported by the coordinates axes and it can be written as $F(d\textbf{x})=F^{(1)}(dx_{1})+F^{(2)} (dx_{2})$. The L\'evy measures of the components are
	\[
	F^{(1)}(dx_{1})=\frac{1}{|x_{1}|^{1+r_{1}}}dx_{1}  \qquad \mbox{and} \quad F^{(2)}(dx_{2})=\frac{1}{|x_{2}|^{1+r_{2}}}dx_{2} .
	\]
	
	More precisely, 
	\begin{equation}
	\begin{aligned}
	\int 1 \wedge \|\textbf{x}\|^{r}F(d\textbf{x}) &= \int_{0<x_{1}, x_{2}<1}|x_{1}^{2}+x_{2}^{2}|^{r/2}F(dx_{1}, dx_{2})\\
	&= \int_{0<x_{1}<1}|x_{1}|^{r}F^{(1)}(dx_{1}) + \int_{0<x_{2}<1}|x_{2}|^{r}F^{(2)}(dx_{2})\\
	& = \int_{0<x_{1}<1}|x_{1}|^{r - 1 -r_{1}}dx_{1}+ \int_{0<x_{2}<1}|x_{2}|^{r-1 -r_2}dx_{2}\\
	\end{aligned}
	\end{equation}
	In order the integrals in the last equality not to be divergent we need $ r>r_{2} $ and $ r> r_{1} $. As a consequence, we find that $ r>\max(r_{2}, r_{1}) $. Using (\ref{bgtwo}) we find that the Blumenthal-Getoor index $r ^{*}= r_{2} $. 
	
\end{ex}


\section{Upper Bound}\label{sec:upperbound}
 In this section we prove \autoref{upbo}. We say that a sequence of estimators $ \widehat{C}_{n}^{12} $ achieves the rate $w_{n} $ on $ \class  $, for estimating $ C_{12} $, if $|\widehat{C}_{n}^{12}-C_{12}| =O_{\Pm}(w_{n})$. This means that the family $\frac{1}{w_{n}}|\widehat{C}_{n}^{12}-C_{12}| $ is tight. Note that the argumentation in line is the bias-variance decomposition.
\subsection{The bias-variance decomposition}
We start with deriving a bias-variance-type decomposition of the estimation error of the estimator for cointegrated-volatility.
\begin{lemma}
	We have that 
	 \[
	 \widehat{C}_{n}^{12}(U_{n})-C_{12} = D_{n} + H_{n}.
	 \]
	 The \textit{deterministic error} given as
	 \begin{equation}\label{dete}
	 D_{n}=\frac{n}{2U^{2}_{n}}\bigg(\log \vert\phi_{n}(\uti)\vert-\log \vert\phi_{n}(\ut)\vert\bigg)-C_{12}
	 \end{equation}
	 and the \textit{stochastic error} as
	 \begin{equation}\label{st}
	 H_{n}=-\frac{n}{2U^{2}_{n}}\left(\log \Bigl|\frac{\phi_{n}(\uti)}{\phi_{n}(\ut)}\Bigr|-\log \Bigl|\frac{\f}{\fit}\Bigr|\bigg(\mathds{1}_{\left\{\f \neq 0 \quad \mbox{and}\quad \fit \neq 0\right\}}\bigg)\right).
	 \end{equation}
	\end{lemma}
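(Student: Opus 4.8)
The statement is an exact algebraic identity, so the plan is simply to reorganise the defining formulas (\ref{esti}), (\ref{tr}) and (\ref{char}). First I would introduce the deterministic ``oracle'' counterpart of the estimator,
\[
\bar{C}_{n}(U_{n}) := \frac{n}{2U_{n}^{2}}\Big(\log|\phi_{n}(\uti)| - \log|\phi_{n}(\ut)|\Big),
\]
obtained from (\ref{esti}) by replacing the empirical characteristic functions by the true ones. This quantity is well defined: $\phi_{n}$ is the characteristic function of the infinitely divisible law of $\textbf{X}_{1/n}$, hence by (\ref{char}) an exponential that never vanishes, so $\log|\phi_{n}(\ut)|$ and $\log|\phi_{n}(\uti)|$ are finite and $\bar{C}_{n}(U_{n})$ involves no randomness. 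By the very definition (\ref{dete}) we have $\bar{C}_{n}(U_{n}) - C_{12} = D_{n}$; taking real parts in (\ref{char}) (the drift term and the compensation term contribute only to the imaginary part) together with (\ref{tr}) moreover exhibits $D_{n}$ as a functional of the L\'evy measure $F$ alone, which is what \Cref{deterministicbou} will exploit.

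Next I would split the error as
\[
\widehat{C}_{n}^{12}(U_{n}) - C_{12} = \big(\widehat{C}_{n}^{12}(U_{n}) - \bar{C}_{n}(U_{n})\big) + \big(\bar{C}_{n}(U_{n}) - C_{12}\big) = \big(\widehat{C}_{n}^{12}(U_{n}) - \bar{C}_{n}(U_{n})\big) + D_{n},
\]
so that it only remains to identify the first bracket with $H_{n}$. Using $\log|a/b| = \log|a| - \log|b|$ on both the empirical and the true quantities and regrouping gives
\[
\widehat{C}_{n}^{12}(U_{n}) - \bar{C}_{n}(U_{n}) = -\frac{n}{2U_{n}^{2}}\left(\log\Bigl|\frac{\phi_{n}(\uti)}{\phi_{n}(\ut)}\Bigr| - \log\Bigl|\frac{\f}{\fit}\Bigr|\right),
\]
which is precisely $H_{n}$ as written in (\ref{st}) once the indicator $\mathds{1}_{\{\f\neq 0\ \mbox{and}\ \fit\neq 0\}}$ is attached to the empirical term. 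Summing the two brackets yields $\widehat{C}_{n}^{12}(U_{n}) - C_{12} = D_{n} + H_{n}$.

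The one point I would treat carefully is the bookkeeping of the indicators: the estimator (\ref{esti}) carries the two separate indicators $\mathds{1}_{\{\widehat{\phi}_{n}(\uti)\neq 0\}}$ and $\mathds{1}_{\{\widehat{\phi}_{n}(\ut)\neq 0\}}$, whereas $H_{n}$ uses the single joint indicator $\mathds{1}_{\{\f\neq 0\ \mbox{and}\ \fit\neq 0\}}$, and these disagree pathwise on the asymmetric events where exactly one empirical characteristic function vanishes. I would remove this ambiguity by working on the event $\Omega_{n}:=\{\widehat{\phi}_{n}(\ut)\neq 0\}\cap\{\widehat{\phi}_{n}(\uti)\neq 0\}$: because $\textbf{X}$ has a Gaussian component the linear combinations $\langle\ut,\Delta^{n}_{j}\textbf{X}\rangle$ and $\langle\uti,\Delta^{n}_{j}\textbf{X}\rangle$ have non-atomic laws, so the empirical characteristic functions---being averages of unit-modulus random variables over independent increments---vanish only on a $\Pm$-null set, i.e.\ $\Pm(\Omega_{n})=1$ (at worst $\Pm(\Omega_{n})\to 1$, which is all the subsequent analysis requires). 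On $\Omega_{n}$ every indicator above equals one and the identity $\widehat{C}_{n}^{12}(U_{n})-C_{12}=D_{n}+H_{n}$ holds verbatim; the complement is irrelevant for the $O_{\Pm}(w_{n})$ estimates of \Cref{sec:upperbound}, where $D_{n}$ is controlled through Assumption \ref{as2} and \Cref{deterministicbou}, and $H_{n}$ through a concentration bound for $\widehat{\phi}_{n}$ around $\phi_{n}$.
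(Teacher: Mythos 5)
Your proof is correct and follows essentially the same route as the paper: you add and subtract the oracle quantity $\frac{n}{2U_{n}^{2}}\big(\log|\phi_{n}(\uti)|-\log|\phi_{n}(\ut)|\big)$ and identify the two brackets with $D_{n}$ and $H_{n}$, exactly as in the paper's bias-variance decomposition. Your extra care with the indicators (working on the event where both empirical characteristic functions are nonzero, which has probability one thanks to the Gaussian component) is a sound refinement of the paper's brief remark that both quantities must be nonzero, but it does not constitute a different approach.
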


\begin{proof}
	We set $ C_{n}^{12} (U_{n})= \frac{n}{2U^{2}_{n}}\bigg(\log \vert\phi_{n}(\uti)\vert-\log \vert\phi_{n}(\ut)\vert\bigg) $, recalling the form of the estimator (\ref{esti}). We get
	\begin{equation}\label{eq1}
	\begin{aligned}
\widehat{C}_{n}^{12}(U_{n})-C_{12} & = \widehat{C}_{n}^{12}(U_{n}) +C_{n}^{12} (U_{n})-  C_{n}^{12}(U_{n}) -C_{12}\\
& =C_{n}^{12} (U_{n})-C_{12}+\widehat{C}_{n}^{12}(U_{n}) -  C_{n}^{12}(U_{n}) .
	\end{aligned}
	\end{equation}
Inserting (\ref{esti}) into (\ref{eq1}), we get that \textit{estimation error} is given by $\widehat{C}^{12}_{n}-C_{12}=D_{n}+H_{n}$. We need both quantities $ \fit, \f $ to be different than zero, otherwise the estimation error does not hold. 
\end{proof}

Our goal is to show that the estimation error is stochastically bounded, i.e. $ O_{\Pm}(w_{n}) $. Firstly, we bound the deterministic error.
\subsection{Bounding the deterministic error}\label{deterministicbou}
\begin{lemma}\label{deterr}
	Grant Assumption \ref{as2}. The deterministic error satisfies $|D_{n}|\leq \frac{M}{2}U^{r-2}_{n} + A U^{-2}_{n}$, where $ A$  is a positive constant.
\end{lemma}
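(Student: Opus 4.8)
The plan is to work directly with the closed form of the characteristic function $\phi_n$ in \eqref{char} to evaluate $\log|\phi_n(\tilde{\textbf{u}}_n)|-\log|\phi_n(\textbf{u}_n)|$ explicitly, then compare with $C^{12}$ after dividing by $2U_n^2/n$. First I would take the modulus of $\phi_n$: since $|\exp(z)|=\exp(\Re z)$, the drift term $i\langle\textbf{u}_n,\textbf{b}\rangle$ and the imaginary compensator correction $-i\langle\textbf{u}_n,\textbf{x}\rangle\mathds{1}_{\{\|\textbf{x}\|\le1\}}$ drop out, leaving
\[
\log|\phi_n(\textbf{u}_n)| = \frac{1}{n}\left(-\frac{\langle C\textbf{u}_n,\textbf{u}_n\rangle}{2} + \int_{\R^2}\bigl(\cos\langle\textbf{u}_n,\textbf{x}\rangle - 1\bigr)F(d\textbf{x})\right),
\]
and similarly for $\tilde{\textbf{u}}_n$. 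Subtracting and using the quadratic-form identities $\langle C\textbf{u}_n,\textbf{u}_n\rangle-\langle C\tilde{\textbf{u}}_n,\tilde{\textbf{u}}_n\rangle = 4C^{12}U_n^2$ established just before \eqref{tr}, the Gaussian part contributes exactly $\tfrac{n}{2U_n^2}\cdot\tfrac{1}{n}\cdot 2C^{12}U_n^2 = C^{12}$, which cancels the $-C^{12}$ in \eqref{dete}. Hence
\[
D_n = \frac{1}{2U_n^2}\int_{\R^2}\Bigl(\cos\langle\tilde{\textbf{u}}_n,\textbf{x}\rangle - \cos\langle\textbf{u}_n,\textbf{x}\rangle\Bigr)F(d\textbf{x}),
\]
and with $\textbf{u}_n=(U_n,U_n)$, $\tilde{\textbf{u}}_n=(U_n,-U_n)$ this is $\tfrac{1}{2U_n^2}\int(\cos(U_n(x_1-x_2))-\cos(U_n(x_1+x_2)))F(d\textbf{x}) = \tfrac{1}{U_n^2}\int \sin(U_nx_1)\sin(U_nx_2)\,F(d\textbf{x})$ by the product-to-sum formula.

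Next I would bound $|D_n|$ by splitting the integral into the region where the co-jumps are "small", say $|x_1x_2|\le 1$ (equivalently the relevant bounded region tied to Assumption \ref{as2}), and its complement. On the large region, $F$ has finite mass there (it is a L\'evy measure), so $|\sin(U_nx_1)\sin(U_nx_2)|\le 1$ gives a contribution bounded by $\tfrac{1}{U_n^2}\cdot F(\{|x_1x_2|>1\}) =: AU_n^{-2}$, which is exactly the second term of the claimed bound. On the small region I use the elementary inequality $|\sin(U_nx_i)|\le |U_nx_i|^{\,r/2}\wedge 1$ — more precisely $|\sin t|\le|t|^{a}$ for $|t|$ bounded and any $a\in(0,1]$, combined with $|\sin t|\le1$ — to get $|\sin(U_nx_1)\sin(U_nx_2)|\le U_n^{r}\,(1\wedge|x_1x_2|^{r/2})$ (absorbing constants), so that this piece is bounded by $\tfrac{1}{U_n^2}\cdot U_n^{r}\int(1\wedge|x_1x_2|^{r/2})F(d\textbf{x}) \le \tfrac{M}{2}U_n^{r-2}$ after invoking Assumption \ref{as2}; the factor $\tfrac12$ and the precise constant can be arranged by the choice of splitting threshold. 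Summing the two pieces yields $|D_n|\le \tfrac{M}{2}U_n^{r-2}+AU_n^{-2}$.

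The only mildly delicate point — and the one I would be most careful about — is the interplay between the exponent $r/2$ used per coordinate and the global bound in Assumption \ref{as2}, which controls $\int(1\wedge|x_1x_2|^{r/2})F$: one must make sure that $|\sin(U_nx_1)\sin(U_nx_2)|$ is genuinely dominated by (a constant times) $U_n^r(1\wedge|x_1x_2|^{r/2})$ uniformly, since near the origin $|\sin(U_nx_i)|\approx U_n|x_i|$ is better than $|U_nx_i|^{r/2}$ when $|x_i|$ is small but $U_n|x_i|$ is not, and one needs the bound to be uniform over all of $\R^2$ after the large/small split. Handling the case $r=0$ (where the bound degenerates to $O(U_n^{-2})$ from the finite-mass term alone) is a trivial special case. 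Everything else is routine estimation; no probabilistic input is needed here since $D_n$ is purely analytic.
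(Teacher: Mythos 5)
Your proposal is correct and follows essentially the same route as the paper: the Gaussian part cancels, leaving $D_n$ as the normalized difference of the cosine integrals against $F$, which you control (via the exact identity $D_n=U_n^{-2}\int\sin(U_nx_1)\sin(U_nx_2)\,F(dx_1,dx_2)$ instead of the paper's inequality $|\cos x-\cos y|\le 2\wedge|x^{2}-y^{2}|$) by $U_n^{r}\bigl(1\wedge|x_1x_2|^{r/2}\bigr)$ on the small-co-jump region plus a finite-mass term elsewhere, and then invoke Assumption \ref{as2}; your worry about the per-coordinate exponent is unfounded since $|\sin t|\le|t|^{r/2}$ for all $t$ when $r/2\le 1$. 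The only blemish is the constant: your argument yields $M U_n^{r-2}$ rather than $\tfrac{M}{2}U_n^{r-2}$ (and the splitting threshold cannot repair this), but the paper's own proof has the same cosmetic mismatch (it gives $2^{r/2}M/2$), and only the order $O(U_n^{r-2})$ is used downstream.
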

\begin{proof}
	Recall the characteristic function of $ \textbf{X} _{1/n}$ in (\ref{char}). We define 
	\begin{align}\label{gamma}
	d_{n}&=2\int_{\R^{2}}\bigg(1-\cos\left(\big\langle \ut ,\textbf{x} \right\rangle\big)\bigg)F(d\textbf{x})\\
	\tilde{d}_{n}&=2\int_{\R^{2}}\bigg(1-\cos\big(\left\langle \uti,\textbf{x} \right\rangle\big)\bigg)F(d\textbf{x}). 
	\end{align}
	Therefore,
	\[
	\vert \phi_{n}(\ut)\vert=\exp\left(-\frac{1}{2n}\bigg(\left\langle C \ut, \ut \right\rangle+d_{n}\bigg)\right)
	\]
	and $\vert \phi_{n}(\uti)\vert=\exp\left(-\frac{1}{2n}\bigg(\left\langle C \uti,\uti \right\rangle+\tilde{d}_{n}\bigg)\right)$. Notice that here we use an argument of complex analysis. After taking the absolute value of the characteristic function, the imaginary part of the exponent is vanishing. Summing up,
	\begin{equation}\label{es}
	\frac{n}{2U^{2}_{n}}\big(\log \vert\phi_{n}(\uti)\vert-\log \vert\phi_{n}(\ut) \vert\big)- C_{12}=\frac{1}{4U^{2}_{n}}\left(d_{n}-\tilde{d}_{n}\right).
	\end{equation}
	By (\ref{es}), we have\\
	\begin{equation*}
	\begin{aligned}
	|D_{n}| &=\frac{1}{4U^{2}_{n}}\Bigg|\int\bigg(1-\cos\big(\left\langle \ut,\textbf{x}\right\rangle\big)\bigg)F(d\textbf{x})-\int\bigg(1-\cos\big(\left\langle\uti,\textbf{x}\right\rangle\big)\bigg)F(d\textbf{x})\Bigg|\\
	&=\frac{1}{4U^{2}_{n}}\Bigg|\int\bigg(\cos\big(\left\langle \uti ,\textbf{x}\right\rangle\big) -\cos\big(\left\langle \ut,\textbf{x}\right\rangle\big)\bigg)F(d\textbf{x})\Bigg|\\
	&\leq \frac{1}{4U^{2}_{n}}\int\left(2\wedge|\left\langle \ut,\textbf{x}\right\rangle^{2}-\left\langle\uti,\textbf{x}\right\rangle^{2}|\right)F(d\textbf{x})\\
	&=\frac{1}{4U^{2}_{n}}\int \big(2\wedge |4U^{2}_{n}x_{1}x_{2}|\big) F(dx_{1},dx_{2}),\\
	\end{aligned}
	\end{equation*}
	where we used the fact that $|\cos x - \cos y| \leq 2\wedge |x^{2}-y^{2}|$. Using the inequality $a\wedge b \leq a^{p}b^{1-p}$ for $p\in (0,1)$, the last term can be bounded as follows
	\begin{equation}
	\begin{split}
	|D_n| &\leq\frac{1}{2U^{2}_{n}}\int \bigg( 1\wedge 2U^{2}_{n}|x_{1}x_{2}|\bigg)F(dx_{1},dx_{2})\\
	&\leq \frac{2^{r/2}}{2U^{2}_{n}}\left(\int_{B_{1}(0)}\big(U^{2}_{n}|x_{1}x_{2}|\big)^{r/2}1^{1-r/2}F(dx_{1},dx_{2})+\int_{\R^{2}\setminus B_{1}(0)}1 F(dx_{1},dx_{2}) \right)\\
	&=\frac{2^{r/2}U^{r-2}_{n}}{2}\int_{B_{1}(0)}|x_{1}x_{2}|^{r/2}F(dx_{1},dx_{2})+
	\frac{U^{-2}_{n}}{2}F \big(\R^{2}\setminus B_{1}(0)\big),
	\end{split}
	\end{equation}
here $ \frac{r}{2}\in (0,1) $ because $ r\in(0,2) $ is the co-jump index activity. By Assumption \ref{as2} and for some constant $ A>0$,

	\begin{equation}\label{det}
	|D_{n}|\leq \frac{2^{r/2}M}{2}U^{r-2}_{n} + A U^{-2}_{n}
	\end{equation}
	as required.
\end{proof}

\subsection{Bounding the stochastic error}\label{stochasticbou}
We want to investigate how close the empirical characteristic function is to the characteristic function of a two-dimensional L\'evy process. The variables $e^{i\left\langle \ut, \Delta^{n}_{j}\textbf{X}\right\rangle}$ are i.i.d. as $j$ varies, with expectation $\phi_{n}(\ut)$. The same statement holds true for $e^{i\left\langle \uti,\Delta^{n}_{j}\textbf{X}\right\rangle}$ as well. So $\fit  $ is an unbiased estimator because $ \E[\fit] = \cl$. Also, the variance of the empirical characteristic function is given by $ \var(\fit) = \frac{1}{n}\left(1 - |\cl|^{2}\right) $.  
\begin{definition}\label{varco}
	For a $ \mathbb{C} $- valued random variable $ Z $ we define 
	\[
	\begin{aligned}
	\var(Z)&=\E[(Z-\E(Z))(\bar{Z}-\E(\bar{Z}))]\\
	&=\E[\bar{Z}Z -Z\E(\bar{Z})-\bar{Z}\E(Z)+\E(Z)\E(\bar{Z})]\\
	&=\E(|Z|^{2})-|\E(Z)|^{2}.
	\end{aligned}
	\]
\end{definition}
\begin{lemma}
	Let $V_{n}=\fit-\phi(\ut)$ and $\tilde{V}_{n}=\f-\phi_{n}(\uti)$, where $ V_{n}, \tilde{V}_{n} \in \mathbb{C}$. Then, $\E(|V_{n}|^{2})\leq \frac{1}{n}$ and $\E(|\tilde{V}_{n}|^{2})\leq \frac{1}{n}$.
\end{lemma}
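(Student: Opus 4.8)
The plan is to bound the second moments of $V_n$ and $\tilde V_n$ by recognizing them as the variances of the empirical characteristic functions, which were already recorded in the preceding discussion. First I would note that $V_n = \tilde\phi_n(\mathbf{u}_n) - \phi_n(\mathbf{u}_n) = \widehat\phi_n(\mathbf{u}_n) - \mathbb{E}[\widehat\phi_n(\mathbf{u}_n)]$, since $\widehat\phi_n(\mathbf{u}_n)$ is the sample mean of the i.i.d.\ complex random variables $e^{i\langle \mathbf{u}_n, \Delta^n_j \mathbf{X}\rangle}$ and its expectation is $\phi_n(\mathbf{u}_n)$. Hence $\mathbb{E}(|V_n|^2)$ is exactly $\var(\widehat\phi_n(\mathbf{u}_n))$ in the sense of \Cref{varco}.

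The key computation is then the variance of a sample mean of i.i.d.\ $\mathbb{C}$-valued variables. Writing $Z_j = e^{i\langle \mathbf{u}_n, \Delta^n_j \mathbf{X}\rangle}$, the $Z_j$ are independent with common law, so $\var\!\big(\tfrac1n\sum_{j=1}^n Z_j\big) = \tfrac1n \var(Z_1)$, using that $\var$ as defined in \Cref{varco} is additive over independent summands (the cross terms $\mathbb{E}[(Z_j - \mathbb{E}Z_j)\overline{(Z_k - \mathbb{E}Z_k)}]$ vanish for $j\neq k$ by independence and the fact that $\mathbb{E}[Z_j - \mathbb{E}Z_j] = 0$). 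By \Cref{varco} again, $\var(Z_1) = \mathbb{E}(|Z_1|^2) - |\mathbb{E}(Z_1)|^2 = 1 - |\phi_n(\mathbf{u}_n)|^2$, because $|Z_1| = |e^{i\langle \mathbf{u}_n, \Delta^n_j \mathbf{X}\rangle}| = 1$ almost surely. Therefore
\[
\mathbb{E}(|V_n|^2) = \var(\widehat\phi_n(\mathbf{u}_n)) = \frac{1}{n}\big(1 - |\phi_n(\mathbf{u}_n)|^2\big) \leq \frac{1}{n},
\]
since $|\phi_n(\mathbf{u}_n)|^2 \geq 0$. The argument for $\tilde V_n$ is identical, replacing $\mathbf{u}_n$ by $\tilde{\mathbf{u}}_n$ and using that $e^{i\langle \tilde{\mathbf{u}}_n, \Delta^n_j \mathbf{X}\rangle}$ are likewise i.i.d.\ of modulus one with mean $\phi_n(\tilde{\mathbf{u}}_n)$, giving $\mathbb{E}(|\tilde V_n|^2) = \tfrac1n(1 - |\phi_n(\tilde{\mathbf{u}}_n)|^2) \leq \tfrac1n$.

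There is no real obstacle here; the only point requiring a line of care is the additivity of the complex ``variance'' over independent summands, i.e.\ checking that the off-diagonal covariance terms vanish, which follows from independence once we subtract means. Everything else is the elementary identity $|e^{i\theta}| = 1$ together with \Cref{varco}. I would state the proof in three short lines: identify $V_n$ with a centered sample mean, expand the variance over independent increments, and bound $1 - |\phi_n(\cdot)|^2 \le 1$.
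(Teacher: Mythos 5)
Your proposal is correct and follows essentially the same route as the paper: both identify $V_{n}$ as the centered empirical characteristic function, use its unbiasedness so that $\E(|V_{n}|^{2})=\var(\fit)=\frac{1}{n}\left(1-|\cl|^{2}\right)\leq \frac{1}{n}$, and argue identically for $\tilde{V}_{n}$. Your only addition is spelling out why the complex variance of the sample mean equals $\frac1n\var(Z_{1})$ (vanishing cross terms), a step the paper simply asserts in the discussion preceding the lemma.
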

\begin{proof}
	
	Set $V_{n}=Z\in \mathbb{C}$ such that $ V_n=\fit-\phi_{n}(\ut) $. Remember that $\widehat{\phi}_{n}$ is unbiased due to the fact that $\E[\fit]=\phi_{n}(\textbf{u}_{n})$, thus $|\E(Z)|^{2}=0$. Taking this into consideration with the previous definition (\ref{varco}), we obtain that $\E[|Z|^{2}]=\var[Z]$. Therefore,
	\begin{equation}\label{unvar}
	\E(|V_{n}|^{2})=\var(V_{n})= \var(\fit) = \frac{1}{n}\left(1 - |\cl|^{2}\right) \leq \frac{1}{n}.
	\end{equation}
	The same argument holds also for $ \E(|\tilde{V}_{n}|^{2}) $. This completes the proof.
	\end{proof}
We choose $ \ut=\big(U_{n}, U_{n}) $ and $ \uti=(U_{n}, -U_{n}) $. Recall that we estimate the characteristic function on the diagonal of first and fourth quadrant for calculation simplicity. Particularly, we choose for $ M>0 $, $ r\in[0,2) $ and $ n $ large enough
\begin{equation} \label{you}
U_{n}=
\begin{cases}
\sqrt{n} \quad & \mbox{if $r\leq 1$}\\
\sqrt{(r-1)n\log n}/\sqrt{M} \quad & \mbox{if $r>1$}.
\end{cases}
\end{equation}
\begin{lemma}\label{stoer}
	Grant Assumption \ref{as1}. For some positive constants $A,\Gamma, M $ and on the event $  \left\{|V_{n}|\leq \frac{1}{n^{r/4}}\right\}$ the stochastic error satisfies:
	\begin{equation}\label{stoch}
	\E\left[|H_{n}|\mathds{1}_{\left\{|V_{n}|\leq \frac{1}{n^{r/4}}\right\}}\right]\leq
	\begin{cases}
	\frac{A\Gamma}{\sqrt{n}} \quad & \mbox{if $r\leq 1$}\\
	\frac{4AM}{(r-1)n^{\frac{2-r}{2}}\log n }\quad & \mbox{if $r>1$}.
	\end{cases}
	\end{equation}
	
\end{lemma}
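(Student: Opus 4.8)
The plan is to start from the explicit formula for the stochastic error $H_n$ in (\ref{st}) and reduce it to controlling the difference of two logarithms of ratios of characteristic functions and their empirical counterparts. The key algebraic step is to write
\[
\log\Bigl|\frac{\phi_n(\uti)}{\phi_n(\ut)}\Bigr| - \log\Bigl|\frac{\f}{\fit}\Bigr|
= \Bigl(\log|\phi_n(\uti)| - \log|\f|\Bigr) - \Bigl(\log|\phi_n(\ut)| - \log|\fit|\Bigr),
\]
so that, up to the prefactor $n/(2U_n^2)$, the error is a difference of two terms of the same form, $\log|\widehat\phi_n(\mathbf v)| - \log|\phi_n(\mathbf v)|$. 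For each such term I would use $\f = \phi_n(\uti) + \tilde V_n$ and $\fit = \phi_n(\ut) + V_n$, factor out $\phi_n$, and expand $\log|1 + V_n/\phi_n(\ut)|$ via the elementary bound $|\log|1+z|| \le C|z|$ valid for $|z|\le 1/2$. On the event $\{|V_n|\le n^{-r/4}\}$ (and the analogous event for $\tilde V_n$), this reduces everything to bounding $|V_n|/|\phi_n(\ut)|$ and $|\tilde V_n|/|\phi_n(\uti)|$ in expectation.

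Next I would lower-bound $|\phi_n(\ut)|$ and $|\phi_n(\uti)|$. From the computation in the proof of \Cref{deterr}, $|\phi_n(\ut)| = \exp\bigl(-\tfrac{1}{2n}(\langle C\ut,\ut\rangle + d_n)\bigr)$, and both $\langle C\ut,\ut\rangle = O(U_n^2) = O(n\log n)$ under Assumption \ref{as1}, while $d_n = 2\int(1-\cos\langle\ut,\mathbf x\rangle)F(d\mathbf x)$ is controlled using Assumption \ref{as2} by the same truncation argument as in \Cref{deterr}, giving $d_n = O(U_n^r) = O((n\log n)^{r/2})$. Hence $\tfrac{1}{n}(\langle C\ut,\ut\rangle + d_n)$ is of order $\log n$ (the dominant term being $\langle C\ut,\ut\rangle/n \sim \|C\|_\infty U_n^2/n$), and with the specific choice $U_n = \sqrt{(r-1)n\log n/M}$ this exponent is at most $(r-1)\log n + o(\log n)$, so $|\phi_n(\ut)|^{-1} \lesssim n^{(r-1)/2}$ up to logarithmic factors; in the case $r\le 1$, $U_n=\sqrt n$ gives $|\phi_n(\ut)|^{-1} = O(1)$. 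Combining with $\E|V_n| \le (\E|V_n|^2)^{1/2} \le n^{-1/2}$ and the prefactor $n/(2U_n^2) = O(1/(\log n))$ when $r>1$ and $=1/2$ when $r\le 1$, multiplying the three factors yields exactly the two claimed bounds $A\Gamma/\sqrt n$ and $4AM/((r-1)n^{(2-r)/2}\log n)$.

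The main obstacle will be getting the constants and the exponent of $|\phi_n(\ut)|^{-1}$ sharp enough in the regime $r>1$: the bound must track the precise constant $(r-1)/M$ hidden in $U_n$, since it is what converts $n^{(r-1)/2}/U_n^2 \cdot n^{-1/2}$ into the target $n^{(r-2)/2}/\log n$, and one must be careful that the $d_n$ contribution and the restriction to the event $\{|V_n|\le n^{-r/4}\}$ do not spoil this — in particular checking that $n^{-r/4}/|\phi_n(\ut)|$ stays below $1/2$ so the logarithm expansion is legitimate, which is precisely why the truncation level $n^{-r/4}$ was chosen. The case split $r\le 1$ versus $r>1$ must be carried through all three factors (prefactor, $|\phi_n|^{-1}$, $\E|V_n|$) consistently.
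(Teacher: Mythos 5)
Your proposal follows essentially the same route as the paper's proof: the same splitting of $H_{n}$ into the two terms $\log|\widehat{\phi}_{n}(\cdot)|-\log|\phi_{n}(\cdot)|$, the same linearization $\bigl|\log|1+z|\bigr|\leq C|z|$ on the events $\{|V_{n}|\leq n^{-r/4}\}$, $\{|\tilde{V}_{n}|\leq n^{-r/4}\}$, the same lower bound on $|\phi_{n}(\ut)|$ through the exponent $\tfrac{1}{2n}(\langle C\ut,\ut\rangle+d_{n})$ leading to the two regimes of (\ref{car}), and the same combination of the prefactor $n/U_{n}^{2}$ with the Cauchy--Schwarz step $\E|V_{n}|\leq(\E|V_{n}|^{2})^{1/2}\leq n^{-1/2}$. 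The only minor deviation is that you control $d_{n}$ via Assumption \ref{as2} (giving $O(U_{n}^{r})$) while the paper bounds $d_{n}\leq 4U_{n}^{2}\int(1\wedge\|\textbf{x}\|^{2})F(d\textbf{x})$ and absorbs it into $M$; this changes nothing in the conclusion.
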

\begin{proof}
	Recalling the form of stochastic error (\ref{st}), the first quantity we need to bound is:
	\begin{equation}\label{boch}
	\begin{aligned}
	\frac{1}{\bigl|\cl\bigr|}&=\exp\bigg(\frac{1}{2n}(\left\langle C\ut, \ut\right\rangle+d_{n})\bigg)\\
	& =  \exp\bigg(\frac{1}{2n}\left(C^{11}U^{2}_{n}+C^{22}U^{2}_{n}+2C^{12}U^{2}_{n}+ d_n \right)\bigg)\\
	& \leq \exp\bigg(\frac{1}{2n}U^{2}_{n}\left(C^{11}+C^{22} +2C^{12}+4\int_{\R^{2}}(1\wedge\parallel\textbf{x}\parallel^{2})F(d\textbf{x})\right)\bigg)\\
	& \leq \exp\bigg(\frac{1}{2n}U^{2}_{n} \left(4\left(|C|_{\infty}+ \int_{\R^{2}}(1\wedge \parallel\textbf{x}\parallel^{2})F(d\textbf{x})\right)\right)\bigg)\\
	&\leq \exp\bigg(\frac{1}{2n}U^{2}_{n} \left(4M\right)\bigg).
	\end{aligned}
	\end{equation}
	
	The first inequality holds because 
	\[
	d_{n} = 2 \int \Big(1 - \cos( \langle \textbf{u}_{n}, \textbf{x}\rangle ) \Big)F(d\textbf{x}) \leq  2 \int \Big(1 \wedge |\langle \textbf{u}_{n}, \textbf{x}\rangle |^{2} \Big)F(d\textbf{x}) \leq 4 U_{n}^{2}\int (1 \wedge \|\textbf{x} \|^{2}) F(d\textbf{x}),
	\]
	using the Cauchy-Schwarz inequality for $ | \langle \textbf{u}_{n}, \textbf{x}\rangle|^{2}\leq \|\textbf{u}_{n}\|^{2}\|\textbf{x}\|^{2}$ and the fact that $ U_{n}\geq 1 $. The last inequality in (\ref{boch}) derives from Assumption\ref{as1} and the fact that we always have $ \int_{\R^{2}} (1 \wedge \|\textbf{x} \|^{2} ) F(d\textbf{x}) < \infty$. 
	Next, the form of $U_{n}$ (\ref{you}) implies that
	\begin{equation}\label{car}
	\frac{1}{\vert\cl\vert}\leq
	\begin{cases}
	\Gamma \quad & \mbox{if $r\leq 1$}\\
	4n^\frac{r-1}{2} \quad & \mbox{if $r>1$},
	\end{cases}
	\end{equation}
	where $\Gamma=e^{2M}$.
	Let us now argue that
	\[
	\frac{\f}{\fit}=\frac{\tilde{V}_{n}+\clt}{V_{n}+\cl}=\frac{\phi_{n}(\tilde{u}_{n})\left(1+\frac{\tilde{V}_{n}}{\clt}\right)}{\phi_{n}(u_{n})\left(1+\frac{V_{n}}{\cl}\right)}\neq 0, \infty 
	\]
	as soon as $n\geq n_{0}=(2\Gamma)^{\frac{4}{(2-r)\wedge r}}$ and by (\ref{car}) on the set $\left\{|V_{n}|\leq \frac{1}{n^{r/4}}\right\}$ and $\left\{|\tilde{V}_{n}|\leq \frac{1}{n^{r/4}}\right\}$.
	Therefore, $\left|\frac{V_{n}}{\phi_{n}}\right|\leq \frac{1}{2}$. Accordingly, for the stochastic error on the events $\left\{|V_{n}|\leq \frac{1}{n^{r/4}}\right\}$ and $\left\{|\tilde{V}_{n}|\leq \frac{1}{n^{r/4}}\right\}$ we obtain for some deterministic constant $ A $:
	\begin{equation}
	\begin{aligned}
	|H_{n}|&\leq\frac{n}{2U^{2}_{n}}\Bigg|\log \left|\frac{\f}{\fit}\right|-\log\left|\frac{\clt}{\cl}\right|\Bigg|\\
	&=\frac{n}{2U^{2}_{n}}\Bigg|\log \left|\frac{\f}{\clt}\right|-\log \left| \frac{\fit}{\cl}\right|\Bigg|\\
	&=\frac{n}{2U^{2}_{n}}\Bigg|\log \left|1+\frac{\f-\clt}{\clt}\right|-\log \left| 1 +\frac{\fit-\cl}{\cl}\right|\Bigg|\\
	&=\frac{n}{2U^{2}_{n}}\Bigg|\log \left|1+\frac{\tilde{V}_{n}}{\clt}\right|-\log \left| 1 +\frac{V_{n}}{\cl}\right|\Bigg|\\
	&\leq  \frac{An}{2U^{2}_{n}}\Bigg|\left|\frac{\tilde{V}_{n}}{\clt}\right|-\left|\frac{V_{n}}{\cl}\right|\Bigg|.
	\end{aligned}
	\end{equation}
	In the last inequality, we use the linearized stochastic errors for  $ \log \left|1+\frac{\tilde{V}_{n}}{\clt}\right| \thickapprox  \left|\frac{\tilde{V}_{n}}{\clt}\right|$ because of the fact that $\frac{\tilde{V_{n}}}{\phi_{n}(\tilde{u_{n}})}$ and  $\frac{V_{n}}{\phi_{n}(u_{n})}$ are small enough. So there is a positive constant $ A $ such that  $ \log \left|1+\frac{\tilde{V}_{n}}{\clt}\right|\leq A\left|\frac{\tilde{V}_{n}}{\clt}\right| $. Therefore,
	\begin{equation}\label{sto}
	|H_{n}|\leq \frac{An}{U^{2}_{n}}\max\left(\left|\frac{\tilde{V}_{n}}{\clt}\right|,\left|\frac{V_{n}}{\cl}\right|\right).
	\end{equation}
	Henceforth, for $n\geq n_{0}$, and for some constant $A>0$, we have
	\begin{equation*}
	\begin{aligned}
	\E\left[|H_{n}|\mathds{1}_{\left\{|V_{n}|\leq \frac{1}{n^{r/4}}\right\}}\right]&\leq\E\left[\frac{An}{U^{2}_{n}}\max\left(\left|\frac{\tilde{V_{n}}}{\clt}\right|,\left|\frac{V_{n}}{\cl}\right|\right)\mathds{1}_{\left\{|V_{n}|\leq \frac{1}{n^{r/4}}\right\}}\right]\\
	&\leq \frac{An}{U^{2}_{n}}\left|\frac{1}{\cl}\right|\E\left[|V_{n}|\mathds{1}_{\left\{|V_{n}|\leq \frac{1}{n^{r/4}}\right\}}\right]\\
	&\leq \frac{An}{U^{2}_{n}} \left|\frac{1}{\cl}\right|\E\left(|V_{n}|^{2}\right)^{\frac{1}{2}}\\
	&\leq \frac{An}{U^{2}_{n}} \left|\frac{1}{\cl}\right| \frac{1}{\sqrt{n}}.
	\end{aligned}
	\end{equation*}
	The third inequality holds because we applied the Cauchy-Schwarz inequality and by Lemma \ref{unvar}. To sum up, by (\ref{car}) we get that
	\begin{equation}\label{exsto}
	\E\left[|H_{n}|\mathds{1}_{\left\{|V_{n}|\leq \frac{1}{n^{r/4}}\right\}}\right]\leq
	\begin{cases}
	\frac{A\Gamma}{\sqrt{n}} \quad & \mbox{if $r\leq 1$}\\
	\frac{4AM}{(r-1)n^{\frac{2-r}{2}}\log n }\quad & \mbox{if $r>1$}
	\end{cases}
	\end{equation}
	as required.

	\end{proof}
\begin{remark}
	Here, we are interested in the events $\left\{|V_{n}|\leq \frac{1}{n^{r/4}}\right\}$ and $\left\{|\tilde{V}_{n}|\leq \frac{1}{n^{r/4}}\right\}$ because the probabilities of the events $\left\{|V_{n}|>\frac{1}{n^{r/4}}\right\}$ and $\left\{|\tilde{V}_{n}|> \frac{1}{n^{r/4}}\right\}$ are negligible. Indeed, applying the Chebyshev inequality and by Lemma \ref{unvar} we get
	\[
	\Pm\left(|V_{n}|>\frac{1}{n^{r/4}}\right)\leq n^{r/2}\E\left(|V_{n}|^{2}\right)\leq n^{r/2}\frac{1}{n}=n^{\frac{r-2}{2}},
	\]
	which tends towards zero as $n\to \infty$. Likewise, the probability of the event $\left\{|\tilde{V}_{n}|> \frac{1}{n^{r/4}}\right\}$ tends towards zero as $ n\to \infty $.
\end{remark}
Until now we bound from above the deterministic and stochastic errors. We are now ready to prove that the family $\frac{1}{w_{n}} |\widehat{C}_{n}^{12} -C^{12}|$ is tight in $ \class$ and thus establish an upper bound for our estimator.
\subsection* {End proof of \autoref{upbo}}
Applying the Markov inequality, we get for every $ \epsilon, L>0 $ 
\begin{equation}
\begin{split}
\Pm\left( \frac{1}{w_{n}}|\widehat{C}_{n}^{12}\left(U_{n}\right)-C^{12}|\geq \epsilon\right)& =  \Pm\left(\frac{1}{w_{n}}|\widehat{C}_{n}^{12}\left(U_{n}\right)-C^{12}| > \epsilon, |V_{n}|\leq \frac{1}{n^{r/4}}\right) +\Pm\left(|V_{n}|>\frac{1}{n^{r/4}}\right)\\
&\leq \frac{1}{\epsilon} \E \left[ \frac{1}{w_{n}}|\widehat{C}_{n}^{12}\left(U_{n}\right)-C^{12}| \mathds{1}_{\{|V_{n}|\leq \frac{1}{n^{r/4}}\}} \right] + n^{\frac{r-2}{2}}\\
&\leq \frac{1}{\epsilon}\E\left[\frac{1}{w_{n}}|H_{n}|\mathds{1}_{\{|V_{n}|\leq \frac{1}{n^{r/4}}\}}\right]+ \frac{1}{\epsilon}\frac{1}{w_{n}}|D_{n}| + n^{\frac{r-2}{2}}.
\end{split}
\end{equation}
Further applying Lemmas \ref{stoer} and \ref{deterr}, we deduce that, as $ n\to \infty $
\begin{equation}\label{prob}
\Pm\left( \frac{1}{w_{n}}|\widehat{C}^{12}_{n}\left(U_{n}\right)-C^{12}| \geq \epsilon\right)\leq 
\begin{cases}
 \frac{\Gamma}{\epsilon}\quad & \mbox{if $r\leq 1$}\\
\frac{1}{\epsilon} \frac{2^{\frac{r-2}{2}}M^{r/2}}{(r-1)^{\frac{r-2}{2}}}\quad & \mbox{if $r>1$}
\end{cases}
\end{equation}
for (\ref{prob}) smaller than $ L$, proves that the family $\frac{1}{w_{n}}|\widehat{C}_{n}^{12}(U_{n})-C^{12}|$ is tight in $\textbf{X} \in \class $. The proof is complete.


\section{Lower Bound}\label{sec:lowerbound}

In nonparametric statistics it is common to use a minimax approach in order to prove optimal estimators. In the previous section, we proved \autoref{upbo}, which gave us an upper bound for the estimation of co-integrated volatility using a spectral approach and establishing the rates (\ref{rates}) on the class $ \class $. \par
In this section, we want to prove \autoref{lobo}. The existence of a lower bound on the class $\class$ constitutes the exact minimax rates for the estimation of co-integrated volatility. Indeed, we have something more for the lower bound, namely that any estimator on a general class of It\^o semimartingales satisfying Definition \ref{class} achieves a lower bound with rates (\ref{rates}).  So far, we do not know whether the spectral approach for the upper bound yields the same optimal rate on the larger class of It\^o semimartingale.\par
We refer to Chapter 2 in \cite{tsybakov2009introduction} for the techniques to prove the lower bounds. We establish the lower bound following the argumentation in line with a two-hypothesis test. Next, we introduce a distance between probability measures that will be useful for the lower bound.
\begin{defn} The \textbf{total variation distance} between $\Pm_{0},\Pm_{1}$ is defined as follows:
	\[
	\tv(\Pm_{0},\Pm_{1}):= \sup\left | \int (p_{0}-p_{1})\nu(dx)\right |.
	\]
	where $p_{0} = d\Pm_{0}/d\nu$, $p_{1} = d\Pm_{1}/d\nu$  and $\nu= \Pm_{0}+ \Pm_{1}$ a $\sigma$-finite measure.
\end{defn}
To sum up, in order to prove a lower bound on the minimax probability of error for hypotheses we use the theorem [2.2] in \cite{tsybakov2009introduction}. The lower bound is obtained when the following two properties are satisfied. First, we choose the appropriate parameters for the co-integrated volatility to be close enough but distinguished. Second, we bound from below the total variation distance between the two densities probabilities of our parameters.\par
Let us illustrate the above procedure giving a trivial lemma and proving that the above arguments are adequate, so as to obtain the lower bound corresponding to our family of estimators $\class$. The interested reader may refer to \cite{lehmann2006testing} who explore a lot of examples for hypothesis testing and distances between Gaussian random variables. 

\begin{lemma}\label{twodim}(No jumps for a two-dimensional L\'evy process).
	Assume $\textbf{X}$ belongs to the class $\class$ with L\'evy-Khintchine triplet $(0,\Sigma\Sigma^{\top}, 0)$. Then there are constants $ A, K $ such that
	\[
\liminf_{n\to \infty}	\inf_{\estc}\sup_{\cov \in \class}\Pm[\text{d}(\estc, \cov)> Aw_{n}]\geq K,
	\]
	where $\estc$  is any estimator for the co- integrated volatility withing the class $ \class $, $d$ is the euclidean distance on $\R^{2}$ and $w_{n}=\frac{1}{n}$.
\end{lemma}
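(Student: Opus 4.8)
The plan is to run the two-hypothesis (Le Cam / Tsybakov Theorem~2.2) scheme outlined just before the statement. When $F\equiv 0$ the observations reduce to the $n$ i.i.d.\ increments $\Delta^{n}_{j}\textbf{X}\sim\mathcal N(\textbf{b}/n,\,C/n)$, $j=1,\dots,n$, with $C=\Sigma\Sigma^{\top}$, and $\cov$ is exactly the off-diagonal entry of $C$. First I would fix the diagonal at $C^{11}=C^{22}=M/2$ and pit $C^{12}_{0}=0$ (independent components) against $C^{12}_{1}=2Aw_{n}=2A/n$ for a small constant $A>0$. For all $n$ large both matrices $C_{0},C_{1}$ are positive definite with $\|C_{i}\|_{\infty}=M/2+|C^{12}_{i}|\le M$, so (with $F\equiv 0$) the corresponding L\'evy processes belong to $\class$, and their co-integrated volatilities satisfy $\mathrm d(C^{12}_{0},C^{12}_{1})=2Aw_{n}$.

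Second, I would bound the Kullback--Leibler divergence between the laws $\Pm^{(n)}_{j}=\mathcal N(\textbf{b}/n,\,C_{j}/n)^{\otimes n}$ of the observation vector. By tensorization, $\mathrm{KL}(\Pm^{(n)}_{1}\|\Pm^{(n)}_{0})=n\,\mathrm{KL}(\mathcal N(\textbf{b}/n,C_{1}/n)\|\mathcal N(\textbf{b}/n,C_{0}/n))$. In the Gaussian formula $\tfrac12\big(\operatorname{tr}(\Sigma_{0}^{-1}\Sigma_{1})-2+\log(\det\Sigma_{0}/\det\Sigma_{1})\big)$ the common mean $\textbf{b}/n$ drops out, the scalar factor $1/n$ cancels in $\Sigma_{0}^{-1}\Sigma_{1}$ and in the ratio of determinants, and moreover with $\Sigma_{0}\propto C_{0}=(M/2)I$ the trace term equals the dimension, so only the log-determinant survives: $\mathrm{KL}(\mathcal N(\textbf{b}/n,C_{1}/n)\|\mathcal N(\textbf{b}/n,C_{0}/n))=-\tfrac12\log\!\big(1-4\delta^{2}/M^{2}\big)=O(\delta^{2})$ with $\delta=2A/n$. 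Hence $\mathrm{KL}(\Pm^{(n)}_{1}\|\Pm^{(n)}_{0})=O(A^{2}/n)\to 0$.

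Third, I would feed this into Theorem~2.2 of \cite{tsybakov2009introduction}: with the parameter separation $2Aw_{n}$ and threshold $Aw_{n}$, for any estimator $\estc$,
\[
\sup_{\cov\in\class}\Pm\big[\mathrm d(\estc,\cov)>Aw_{n}\big]\ \ge\ \max_{j\in\{0,1\}}\Pm^{(n)}_{j}\big[\mathrm d(\estc,C^{12}_{j})>Aw_{n}\big]\ \ge\ \tfrac14\,e^{-\mathrm{KL}(\Pm^{(n)}_{1}\|\Pm^{(n)}_{0})}.
\]
Since $\mathrm{KL}(\Pm^{(n)}_{1}\|\Pm^{(n)}_{0})\to 0$, the right-hand side exceeds, say, $K=1/5$ for every $n$ large; taking $\liminf_{n\to\infty}\inf_{\estc}$ yields the assertion. (Alternatively one may bound $\tv(\Pm^{(n)}_{0},\Pm^{(n)}_{1})\to 0$ via Pinsker and use Le Cam's two-point inequality directly.)

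The computation is entirely routine; the only point worth emphasising --- and the reason the rate here is $w_{n}=1/n$ rather than $1/\sqrt n$ --- is the role of the mesh $1/n$ in the covariance $C/n$ of the increments: because this factor cancels out of the KL divergence between the two Gaussian experiments, a parameter gap of order $1/n$ already forces the total information $n\cdot\mathrm{KL}$ to vanish, making the two experiments asymptotically indistinguishable. Recovering the sharper separation $1/\sqrt n$, which matches \autoref{upbo} when $r\le 1$, would instead require keeping $n\cdot\mathrm{KL}$ bounded away from $0$ and $\infty$ and tracking the constants precisely.
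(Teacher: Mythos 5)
Your proposal is correct and runs on the same engine as the paper's proof: a two-point test between bivariate Gaussian laws whose off-diagonal entries differ by order $1/n$, a Kullback--Leibler computation for Gaussians, and Theorem~2.2 of \cite{tsybakov2009introduction} (the paper goes through Pinsker and total variation, you use the $\tfrac14 e^{-\mathrm{KL}}$ form directly; both are fine). The one genuine difference is the observation scheme: the paper fixes the matrices $\left(\begin{smallmatrix}2&1\\1&1\end{smallmatrix}\right)$ and $\left(\begin{smallmatrix}2&1+1/n\\1+1/n&1\end{smallmatrix}\right)$ and computes the KL divergence between two \emph{single} bivariate normals, i.e.\ it implicitly treats the experiment as one draw of $\textbf{X}_{1}$, whereas you work with the law of the full sample of $n$ increments $\mathcal{N}(\textbf{b}/n,C/n)^{\otimes n}$, tensorize, and use the scale invariance of the Gaussian KL in the factor $1/n$ to get total information $n\cdot O(1/n^{2})\to 0$. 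Your accounting is the more careful one, since the estimators in \autoref{lobo} are allowed to use all $n$ increments; the conclusion is the same because a parameter gap of order $1/n$ still drives the $n$-fold KL to zero, which is exactly the observation you make at the end. Your choice of diagonal $M/2$ with perturbation $2Aw_{n}$ versus the paper's concrete matrices is immaterial, as both pairs lie in $\class$ for large $n$ and are separated by a multiple of $w_{n}=1/n$.
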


\begin{proof}
	Consider $\textbf{X}$ and $\textbf{Y}$ belongs to $\class$. Also, we assume that no jumps are occurred so the L\'evy Khintchine triplets for each process will satisfy $(0, \Sigma_{\textbf{X}}\Sigma_{\textbf{X}}^{\top},0)$ and $(0,\Sigma_{\textbf{Y}}\Sigma_{\textbf{Y}}^{\top},0)$ respectively.  
	As a result, $ \textbf{X}$ will evolve as follows:
	\[
	dX^{(1)}(t)=\sigma^{(1)}_{t}dW^{(1)}_{t}
	\] 
	\[
	dX^{(2)}(t)=\sigma^{(2)}_{t}dW^{(2)}_{t}
	\]
	similarly for $ \textbf{Y} $. We know that the It\^o integral $ dX^{(1)}(t)=\sigma^{(1)}_{t}dW^{(1)}_{t} $ is normally distributed with mean $ 0 $  and its variance is given by It\^o isometry which translates to 
	\[
	X^{(1)}\thicksim \mathcal{N}\left(0, \int_{0}^{1}(\sigma^{(1)}_{t})^{2}dt\right).
	\]
	Therefore, $\textbf{X}$ follows the parametric model 
	\[
	\textbf{X}=\left(\begin{smallmatrix}X_{(1)}\\X_{(2)}  \end{smallmatrix}\right)\thicksim\mathcal{N}\bigg( \left(\begin{smallmatrix}0\\0  \end{smallmatrix}\right), \Sigma_{\textbf{X}}\Sigma_{\textbf{X}}^{\top}\bigg)
	\]
	similarly for  $ \textbf{Y} $.\par
	We will prove the lower bound using the two-hypothesis test, as mentioned in the beginning of this section. 
	We observe that
	\[
	\class \supset \mathcal{B}_{M}
	\]
	where $\mathcal{B}_{M}$ is the class of all Brownian motions where the covariance matrix is bounded component-wise by a constant $ M $. As a consequence, 
	\[
	\sup_{\cov \in \mathcal{B}_{M}} \Pm[d(\estc,\cov)]\leq \sup_{\cov \in \class} \Pm[d(\cov,\estc)].
	\]
	This is enough to prove a lower bound for the rate $w_{n}=\frac{1}{n}$ for the class of all Brownian motions. \par
	The two-hypothesis test is the following
	\begin{center}
	
	$\Pm_{\textbf{X}}= \mathcal{N}\left(0, \Sigma_{\textbf{X}}\Sigma_{\textbf{X}}^{\top}\right) $ 
	vs
	$
	\Pm_{\textbf{Y}}=\mathcal{N}\left(0, \Sigma_{\textbf{Y}}\Sigma_{\textbf{Y}}^{\top}\right),
	$
\end{center}
	where the covariance matrices are given by $ \Sigma_{\textbf{X}}\Sigma_{\textbf{X}}^{\top}= \left(\begin{smallmatrix}2 & 1\\ 1& 1 \end{smallmatrix}\right)$ and $ \Sigma_{\textbf{Y}}\Sigma_{\textbf{Y}}^{\top}= \left(\begin{smallmatrix}2 & 1+\frac{1}{n}\\ 1+\frac{1}{n}& 1 \end{smallmatrix}\right)$. Intuitively, we perturb the off-diagonal elements, namely the covariance, by the rate we want to achieve. Following the argumentation of the two-hypothesis test, it is sufficient to prove that the total variation distance is bounded. To do so, we use the Pinsker inequality. By Pinsker inequality we have that 
	\[
	\tv(\Pm_{\textbf{Y}},\Pm_{\textbf{X}})\leq \sqrt{KL(\Pm_{\textbf{Y}},\Pm_{\textbf{X}})/2},
	\]
	where $KL(\Pm_{\textbf{Y}},\Pm_{\textbf{X}})$ is the Kullback-Leibler divergence. Next, we show that the Kullback-Leibler distance is bounded. We define the Kullback-Leibler divergence between two multivariate normal distributions. Here, we denote by $ \Sigma_{1}=\Sigma_{\textbf{X}}\Sigma_{\textbf{X}}^{\top} $ and $\Sigma_{2}=\Sigma_{\textbf{Y}}\Sigma_{\textbf{Y}}^{\top} $. Therefore,
	\[
	KL(\Pm_{\textbf{Y}},\Pm_{\textbf{X}})=\frac{1}{2}\left(\log \frac{|\Sigma_{1}|}{|\Sigma_{2}|}-2 +\text{tr}(\Sigma_{1}^{-1}\Sigma_{2})\right),
	\]
	where $ | \cdot | $ denotes the determinant of a matrix. Calculating the appropriate quantities, we obtain  
	\[
	|\Sigma_{1}|= \left |\begin{smallmatrix}2 & 1\\1 &1 \end{smallmatrix}\right|=1,
	\]
	
	\[
	|\Sigma_{2}|= \left | \begin{smallmatrix} 2& 1+\frac{1}{n}\\1+
	\frac{1}{n} &1 \end{smallmatrix}\right |=2-\left(1+\frac{1}{n}\right)^{2},
	\]
	\[
	\text{tr} (\Sigma_{1}^{-1}\Sigma_{2})= 2-\frac{2}{n}.
	\]
	Therefore,
	\[
	KL(\Pm_{\textbf{X}}, \Pm_{\textbf{Y}})=\frac{1}{2}\left(\log\left(\frac{1}{2- \left(1 + \frac{1}{n}\right)^{2}}\right)-2+2-\frac{2}{n}\right).
	\]
	Consequently, we obtain that the right hand side tends to zero as $ n\to \infty $. By Pinsker inequality, the total variation distance tends to zero. Upon using the minimax probability of error is bounded from below by $ 1/2 $ and the claim follows.
\end{proof}
To prove \autoref{lobo} we need to construct the two-hypothesis test in order to bound from below the minimax probability error as we described previous in \Cref{twodim}.

\subsection{Two-hypothesis test}\label{test}
We let $\textbf{X}$, $\textbf{Y}$ be two-dimensional L\'evy processes with respective triplets $(0,\Sigma_{\textbf{X}}\Sigma_{\textbf{X}}^{\top}, F_{n} )$,  $(0,\Sigma_{\textbf{Y}}\Sigma_{\textbf{Y}}^{\top}, G_{n} )$, where $F_{n}$, $G_{n}$ are L\'evy measures in $\R^{2}$ satisfying
\begin{equation}\label{small}
\int_{\R^{2}} \big( 1 \wedge |x_{1}x_{2}|^{r/2} \big)F_{n}(dx_{1},dx_{2})\leq M, \qquad \int_{\R^{2}}(1\wedge |x_{1}x_{2}|^{r/2})G_{n}(dx_{1},dx_{2})\leq M,
\end{equation}
where $\textbf{x}=(x_{1}, x_{2})$ is a vector in $\R^{2}$ representing the size of small jumps for each process and $M$ is a constant (below $ M $ changes from line to line and may depend on r, but all constants are denoted as $M$).
We set $\Sigma_{\textbf{X}}\Sigma_{\textbf{X}}^{\top}=\left(\begin{smallmatrix}2& 1\\1 &1 \end{smallmatrix}\right)$ and $\Sigma_{\textbf{Y}}\Sigma_{\textbf{Y}}^{\top}=\left(\begin{smallmatrix}2& 1+2w_{n}\\1 +2w_{n}&1 \end{smallmatrix}\right)$ to be the parameters of our two-hypothesis test. Under this setting, we perturb the off-diagonal elements with the rate with which we want to achieve the lower bound. The quantity which we want to recover is the co-integrated volatility, so we need the off-diagonal elements. We use these forms of matrices in order for the Gaussian part to be non-degenerated, namely the eigenvalues of the matrices to be positive. As we discussed in the beginning of this section, it is sufficient to construct two sequences $\textbf{X}_{n}$, $\textbf{Y}_{n}$ which belong to the class $\class$, with the following two properties:\\
\begin{property}\label{prop1}
	The two parameters, namely the two covariance matrices are close enough but distinguished. 
\end{property}
Note that for this property the object of our study is the distance between matrices. In this case we consider as a distance the Frobenius norm, and everything still holds. By construction and Frobenius norm
\[
\begin{split}
||\Sigma_{\textbf{Y}}\Sigma_{\textbf{Y}}^{\top}-\Sigma_{\textbf{X}}\Sigma_{\textbf{X}}^{\top}||_{F}&=\sqrt{\text{tr}(\Sigma_{\textbf{Y}}\Sigma_{\textbf{Y}}^{\top}-\Sigma_{\textbf{X}}\Sigma_{\textbf{X}}^{\top})(\Sigma_{\textbf{Y}}\Sigma_{\textbf{Y}}^{\top}-\Sigma_{\textbf{X}}\Sigma_{\textbf{X}}^{\top})^{\top}}\\
&=\sqrt{8}w_{n},
\end{split}
\]
which means that the parameters are close enough but distinguished. 
\begin{property}\label{prop2}
	The total variation distance between $\Pm_{\textbf{X}}$ and $\Pm_{\textbf{Y}}$ tends towards zero.
\end{property}
As far as the second property is concerned, the total variation distance tends towards zero is not trivial. In fact, achieving the second property is quite demanding and we prove several lemmas to conclude this property.

\subsection{Construction of the co-jump measure in $\R^{2}$}
First, we have to construct a measure to satisfy property  (\ref{small}). Before we proceed with the technical part of this construction, let us highlight the idea behind it.\par 
Note that we are studying a two-dimensional L\'evy process, so it is reasonable to include the possibility of dependence between the two jump components, more specifically the common jumps, i.e. the co-jumps. \par 
Observe here that co-jumps are one-dimensional objects. Co-jumps are the jumps on the diagonal, due to the fact that the two processes jump at the same time with the same jump size. Mathematically speaking, this can be formalized as follows:
\begin{defn}(Co-jump measure)
	Let $\textbf{X}=\left(X^{(1)}, X^{(2)}\right)$  be a L\'evy processes, with $\Delta X^{(j)}_{t}\neq 0$ for $j=1,2$. Here, $\Delta X^{(j)}_{t}=X_{t}^{(j)}-X^{(j)}_{t^{-}}$ denotes the possible jump at time $t$. The measure on $\R^{2}$ is defined by:
	\[
	F_{n}(B)= \E \left[\# \left\{ t \in [0,1]: (\Delta X^{(1)}_{t},\Delta X^{(2)}_{t}) \in B \right\}\right]=\E \left [\mu^{X^{(1)}X^{(2)}} (\omega; t, B)\right],
	\]
	where $B=\left\{ (x_{1},x_{2})\in \R^{2} : x_{1}=x_{2} \right\}$. This is called the L\'evy measure in $\R^{2}$  of \text{co-jumps} for $\textbf{X}$. $F_{n}(B)$ is the expected number of joint jumps, per unite of time, whose size falls into $B$, and $\mu$ is the Poisson random measure of co-jumps where, $\mu^{X}(\omega;t,B)=\sum_{s\leq t} \mathds{1}_{B}(\Delta X^{(1)}_{t},\Delta X^{(2)}_{t})$.
\end{defn}
Because the jump dynamics of the co-jump measure is dictated by its density, say $f_{n}$, we can write the measure of the co-jumps as following, for $A\subset B \subset \R^{2}$ 
\begin{equation}\label{cojump1}
F_{n}(A):= \int f_{n}(x) \mathds{1}_{A}(x,x) dx:=\int f_{n}(x)\mathds{1}_{\tilde{A}}(x)dx,
\end{equation}
where $ \tilde{A} =\{x:(x,x)\in A\}$.\par 
The support of the co-jump measure is on $ \R $ but the co-jump measure lives on $ \R^{2} $. We focus on the case of co-jumps, i.e., when $X^{(1)}$ and $X^{(2)}$ jump at the same time with the same jump size. We are interested in the jumps on the diagonal.\par 
Further, we do not integrate with respect to the Lebesgue measure, since it is equal to zero on the diagonal. In this case we integrate with respect to a measure that is not absolutely continuous with the Lebesgue measure, which we call \textit{co-jump measure}. We assume that $ F_{n}, G_{n} $ have densities $ f_{n}  $ and $g_{n} $ respectively. By (\ref{cojump1}) we want to show that 
\begin{equation}\label{onedim}
\int_{\R^{2}}g(x_{1},x_{2})dF_{n}(x_{1},x_{2})=\int_{\R} g(x, x)f_{n}(x)dx
\end{equation}
without being equal to zero. Being interested in the set of co-jumps, we pass from two dimensions to one dimension. Co-jumps are the concept of total dependency between the small jump components. Indeed, we use the argument of dependency in order to reduce dimensionality. In order to prove this argument, we need the following lemma. 
\begin{lemma}\label{diagon}
	Let $g: \R^{2}\to \R$ be a measurable function and $F_{n}$ be the co-jump measure on $\R^{2}$. Then 
	\[
	\int_{\R^{2}} g(x_{1}, x_{2})d F_{n}(x_{1}, x_{2})=\int_{B}  g(x_{1}, x_{2})d F_{n}(x_{1}, x_{2})= \int f_{n}(x) g(x,x)dx
	\]
	where $F_{n}(A)=\int f_{n}(x)\mathds{1}_{A} (x,x) dx$ is the measure of co-jumps, $f_{n}$ the density function of the co-jump measure $F_{n}$, $A\subset B $, and $B=\left\{ (x_{1}, x_{2})\in \R^{2}: x_1=x_2\right\}$.
\end{lemma}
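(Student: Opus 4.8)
The plan is to identify the co-jump measure $F_n$ with the image (pushforward) of the one-dimensional measure $\mu_n(dx):=f_n(x)\,dx$ under the diagonal embedding $\iota\colon\R\to\R^2$, $\iota(x)=(x,x)$, and then to deduce the asserted identity by the standard measure-theoretic induction: indicators, then nonnegative simple functions, then arbitrary nonnegative measurable functions, then general measurable functions.

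\textbf{Step 1 (the measure is carried by the diagonal).} By the very definition $F_n(A)=\int f_n(x)\,\mathds{1}_A(x,x)\,dx$, so for any Borel set $A\subset\R^2$ disjoint from $B=\{x_1=x_2\}$ the integrand vanishes identically and $F_n(A)=0$; in particular $F_n(\R^2\setminus B)=0$. Hence for every nonnegative (or $F_n$-integrable) measurable $g$ one has $\int_{\R^2}g\,dF_n=\int_{B}g\,dF_n$, which is the first equality in the statement.

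\textbf{Step 2 (pushforward identification, the base case).} For a Borel set $A\subset\R^2$ write $\tilde A=\iota^{-1}(A)=\{x\in\R:(x,x)\in A\}$, which is Borel since $\iota$ is continuous. Then $\mathds{1}_A(x,x)=\mathds{1}_{\tilde A}(x)$, so the definition of $F_n$ reads $F_n(A)=\int_{\R}\mathds{1}_{\tilde A}(x)f_n(x)\,dx$; that is, $F_n=\iota_{\#}\mu_n$. Taking $g=\mathds{1}_A$ this is exactly $\int_{\R^2}\mathds{1}_A\,dF_n=\int_{\R}f_n(x)\,\mathds{1}_A(x,x)\,dx=\int_{\R}f_n(x)\,g(x,x)\,dx$.

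\textbf{Step 3 (extension) and the main obstacle.} By linearity the identity $\int_{\R^2}g\,dF_n=\int_{\R}g(x,x)f_n(x)\,dx$ passes from indicators to nonnegative simple functions; by monotone convergence it extends to every nonnegative measurable $g$, noting that $x\mapsto g(x,x)=g\circ\iota$ is measurable as a composition with the continuous map $\iota$; and by splitting $g=g^{+}-g^{-}$ it extends to any measurable $g$ for which one side (hence the other) is finite. Combining with Step 1 yields the full chain of equalities. There is no deep difficulty: the content is bookkeeping, and the only points needing care are checking that $F_n$ is a genuine ($\sigma$-finite) Borel measure on $\R^2$ (immediate from $f_n\in L^1_{\mathrm{loc}}$ and countable additivity of the Lebesgue integral) and that the diagonal and the preimages $\tilde A$ are handled measurably (immediate from continuity of $\iota$). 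The role of the lemma is precisely to legitimise reducing all subsequent computations with $F_n$, $G_n$ to one-dimensional integrals against the densities $f_n$, $g_n$.
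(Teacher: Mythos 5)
Your proposal is correct and follows essentially the same route as the paper: the paper also proves the identity first for simple (step) functions built from indicators and then extends by linearity and limits, which is exactly your induction scheme. Your write-up merely makes explicit some points the paper leaves implicit — the pushforward identification under the diagonal embedding, the measurability of the preimages, the monotone-convergence and $g^{+}-g^{-}$ steps, and the fact that $F_{n}$ vanishes off the diagonal, which gives the first equality.
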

\begin{proof}
	First we use the step functions to prove the lemma.This extends by linearity and by taking limits for all measurable functions $g$. Indeed, we only need to show the lemma for the case of step functions. Let $g(x_{1}, x_{2})=\sum_{k = 1}^{m} a_{k}\mathds{1}_{A_{k}}(x_1, x_2)$, where $A_{k}\subset A$ and $\cup_{k=1}^{m}A_{k}=A$. Therefore,
	\begin{equation}
	\begin{aligned}
	\int g(x_{1}, x_{2})d F_{n}(x_{1}, x_{2})&=\int \sum_{k=1}^{m}a_{k}\mathds{1}_{A_{k}}(x_{1},x_{2})dF_{n}(x_{1},x_{2})\\
	&=\sum_{k=1}^{m}a_{k}\int \mathds{1}_{A_{k}}(x_{1},x_{2})dF_{n}(x_{1},x_{2})\\
	&=\sum_{k=1}^{m}a_{k}\int_{A_{k}} dF_{n}(x_{1},x_{2})\\
	&=\sum_{k=1}^{m}a_{k}\int \mathds{1}_{A_{k}}(x,x)f_{n}(x)dx\\
	&=\int \sum_{k=1}^{m}a_{k}\mathds{1}_{A_{k}}(x,x)f_{n}(x)dx\\
	&=\int g(x,x)f_{n}(x)dx,
	\end{aligned}
	\end{equation}
	and the claim follows.
\end{proof}
Furthermore, we need to find a measure whose mass is bounded away from the origin but may explode around 0 and integrates $\| x\| ^{2}$. In order to construct the co-jump measure with the above properties, we need to find an appropriate density function for the co-jumps measure $F_{n}(A)$ so as to satisfy the following condition for $r\in(1,2)$ and $ \textbf{x}=(x,x) $:
\[
\int_{A} \left(1\wedge |x_{1}x_{2}|^{r/2}\right)F_{n}(dx_{1},dx_{2})=\int \left(1\wedge|x|^{r}\right)f_{n}(x)\mathds{1}_{A}(x,x)dx< \infty.
\]
Indeed, the following lemma implies condition (\ref{small}) by choosing properly the density function of the co-jumps.
\begin{prop}\label{propo}
	Let $ w_{n} $ be defined by (\ref{rates}) and $r\in(0,2)$. Assume the even functions $h_{n}: \R^{2} \to \R$ such that $h_{n}(\textbf{u})=\tilde{h}_{n}(U)\cdot \tilde{h}_{n}(U)$, where $\textbf{u}=(U,U)$,
	\[
	\tilde{h}_{n}(U)=
	\begin{cases}
	\sqrt{w_{n}}&\quad \mbox{$U\leq U_{n}$}\\
	\sqrt{w_{n}}e^{-(U-U_{n})^{3}}& \quad \mbox{$U>U_{n}$},
	\end{cases}
	\]
	and $U_{n}= 2 w_{n}^{1/(r-2)} $. Then, for any $A \in \mathcal{B}(\R^{2}) $
	\begin{equation}\label{small2}
	\int (1\wedge |x|^{r})\mathds{1}_{A}(x,x)F_{n}(dx) < \infty,
	\end{equation}
	where $F_{n}(A)=\int_{\R}\frac{|H_{n}(x)|}{x^{2}}\mathds{1}_{A}(x,x)dx$ and $ H_{n} $ is the Fourier transform of $ h_{n} $.
\end{prop}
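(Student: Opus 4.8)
The plan is to reduce the statement, via \Cref{diagon}, to a one-dimensional integrability estimate and then dispatch it by splitting near and far from the origin. Applying \Cref{diagon} with $g(x_{1},x_{2})=\big(1\wedge|x_{1}x_{2}|^{r/2}\big)\mathds{1}_{A}(x_{1},x_{2})$ and using $|x\cdot x|^{r/2}=|x|^{r}$ yields
\[
\int\big(1\wedge|x|^{r}\big)\mathds{1}_{A}(x,x)\,F_{n}(dx)=\int_{\R}\big(1\wedge|x|^{r}\big)\frac{|H_{n}(x)|}{x^{2}}\,\mathds{1}_{\tilde A}(x)\,dx\le\int_{\R}\big(1\wedge|x|^{r}\big)\frac{|H_{n}(x)|}{x^{2}}\,dx ,
\]
so it is enough to bound the last integral. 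Since $h_{n}$ is a tensor product, $h_{n}(u_{1},u_{2})=\tilde h_{n}(u_{1})\tilde h_{n}(u_{2})$, its Fourier transform factorises and on the diagonal $H_{n}(x,x)=\tilde H_{n}(x)^{2}$ with $\tilde H_{n}=\mathcal F\tilde h_{n}$; in particular $|H_{n}(x)|=\tilde H_{n}(x)^{2}\ge0$. The profile $\tilde h_{n}$ is even, continuous (its two branches agree at $|U|=U_{n}$), and lies in $L^{1}(\R)$ because the tail $e^{-(|U|-U_{n})^{3}}$ decays super-exponentially; hence $\tilde H_{n}$ is bounded with
\[
\sup_{x}|H_{n}(x)|\le\|\tilde h_{n}\|_{1}^{2}=\Big(2U_{n}\sqrt{w_{n}}+2\sqrt{w_{n}}\textstyle\int_{0}^{\infty}e^{-t^{3}}\,dt\Big)^{2}<\infty
\]
for every fixed $n$ (note $U_{n}=2w_{n}^{1/(r-2)}$ is a finite positive number).

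Next I split $\int_{\R}(1\wedge|x|^{r})\,|H_{n}(x)|\,x^{-2}\,dx$ at $|x|=1$. On $\{|x|>1\}$ we have $1\wedge|x|^{r}=1$, and the boundedness of $H_{n}$ together with $\int_{|x|>1}x^{-2}\,dx=2$ gives a contribution at most $2\sup_{x}|H_{n}(x)|<\infty$. On $\{|x|\le1\}$ we have $1\wedge|x|^{r}=|x|^{r}$, so the contribution is at most $\big(\sup_{x}|H_{n}(x)|\big)\int_{|x|\le1}|x|^{r-2}\,dx$, which is finite precisely when $r\in(1,2)$, i.e. $r-2\in(-1,0)$, and then equals $\tfrac{2}{r-1}\sup_{x}|H_{n}(x)|$. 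Adding the two pieces gives finiteness of the reduced integral and hence (\ref{small2}). The same splitting with $|x|^{r}$ replaced by $2x^{2}$ moreover shows $\int_{\R^{2}}(1\wedge\|\textbf{x}\|^{2})F_{n}(d\textbf{x})<\infty$, so $F_{n}$ is a bona fide L\'evy measure even though it charges every neighbourhood of the origin with infinite mass.

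The arithmetic above is routine; the only genuinely constraining step is integrability at the origin. Near $x=0$ the integrand is comparable to $|x|^{r-2}\tilde H_{n}(0)^{2}$ with $\tilde H_{n}(0)=\int_{\R}\tilde h_{n}>0$, so there is no cancellation to exploit, and its local integrability is \emph{equivalent} to $r>1$. This is precisely why the construction is meaningful only in the infinite-variation range, and why the harmonic-mean lower bound for the co-jump index obtained in \Cref{coj} is what guarantees $r>1$ before this proposition is invoked (for $r\le1$ one works instead with a co-jump measure of finite total mass, along the lines of the Gaussian reduction in \Cref{twodim}). A secondary, purely bookkeeping remark: the scale $U_{n}=2w_{n}^{1/(r-2)}$ plays no role in the finiteness asserted here, which holds for each fixed $n$, and is calibrated only for the subsequent step, where this $F_{n}$ is fed into the two-hypothesis test so as to make the total-variation distance between $\Pm_{\textbf{X}}$ and $\Pm_{\textbf{Y}}$ vanish at the prescribed rate $w_{n}$.
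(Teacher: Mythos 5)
Your argument proves strictly less than the proposition is required to deliver, and the remark you add at the end (``the scale $U_{n}=2w_{n}^{1/(r-2)}$ plays no role in the finiteness asserted here'') is exactly where the gap lies. The paper does not just need $\int (1\wedge|x|^{r})\,|H_{n}(x)|x^{-2}dx<\infty$ for each fixed $n$; it needs this quantity bounded by a constant \emph{uniformly in $n$}, because the conclusion drawn from the proposition is that condition (\ref{small}) holds, i.e.\ that the perturbed measures $F_{n},G_{n}$ keep the hypotheses $\textbf{X}_{n},\textbf{Y}_{n}$ inside the fixed class $\class$ (same $M$) while the covariances are separated by $w_{n}$. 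Your bound goes through $\sup_{x}|H_{n}(x)|\leq\|\tilde h_{n}\|_{1}^{2}\asymp w_{n}U_{n}^{2}$, so the near-origin contribution is of order $w_{n}U_{n}^{2}\cdot\frac{2}{r-1}\asymp w_{n}^{r/(r-2)}\to\infty$ as $n\to\infty$; with such an $n$-dependent bound the test measures would only lie in $\mathcal{L}^{r}_{M_{n}}$ with $M_{n}\to\infty$, and the minimax lower bound over $\class$ does not follow. The paper avoids this by proving the finer pointwise estimate (\ref{Hnx}), $|H_{n}(x)|\leq Cw_{n}\big(U_{n}\mathds{1}(|x|\leq 1/U_{n})+|x|^{-1}\mathds{1}(1/U_{n}<|x|\leq1)+\mathds{1}(|x|>1)\big)$, obtained from the explicit shape of $h_{n}$ (constant on $[0,U_{n}]$, fast decay beyond), and then the three-region splitting gives $\int(1\wedge|x|^{r})|H_{n}(x)|x^{-2}dx\leq Cw_{n}U_{n}^{2-r}\leq C$ precisely because of the calibration $U_{n}=2w_{n}^{1/(r-2)}$. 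That cancellation between $w_{n}$ and $U_{n}^{2-r}$ is the content of the proposition; your crude sup-norm bound cannot produce it.

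Two secondary points. First, you identify $H_{n}$ on the diagonal with $\tilde H_{n}(x)^{2}$, i.e.\ you take the two-dimensional tensor Fourier transform; the paper instead takes the one-dimensional Fourier transform of $U\mapsto h_{n}(U)=\tilde h_{n}(U)^{2}$ (the Fourier transform of a product is a convolution, not the square of $\tilde H_{n}$), and this one-dimensional convention is what is used later, e.g.\ in $\tilde\psi_{n}''(U)=h_{n}(U)$ in the total-variation lemma, so your reading is not the one the construction relies on (though it does not affect the per-$n$ finiteness you prove). Second, your observation that integrability at the origin forces $r>1$ is correct and consistent with the paper, whose proof explicitly restricts to $r\in(1,2)$ at the splitting step even though the statement says $r\in(0,2)$; for $r\leq1$ the lower bound is handled by the Brownian subclass as in \Cref{twodim}, so this part of your discussion is fine.
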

\begin{proof}
	The mathematical tool used for the formation of the density function is the Fourier transform. Intuitively, we use the function $ h_{n} $ as a constant inside a fixed interval and which decays exponentially outside this interval. Also, notice that in the exponential we used the power of 3 because we need to differentiate two times, as we shall see later.\par 
	Notice that $h_{n}$ has a range on $\R$, which is why we use the Fourier transform on $\R$. The pair of Fourier transform takes the following form:
	\[
	H_{n}(x)= \left(\mathcal{F}h_{n}\right)(x) =\int e^{iUx}h_{n}(U)dU
	\]
	\[
	h_{n}(U)=\left( \mathcal{F}^{-1}H_{n}\right)(U) =\frac{1}{2\pi}\int e^{-i Ux}H_{n}(x)dx
	\]
	and the respective first derivatives will have the form
	\[
	\partial_{1} H_{n}=xH_{n}(x)
	\]
	\[
	\partial_{1}h_{n}=i\mathcal{F}^{-1}\partial_{1}H_{n}(x).
	\]
	For a thorough analysis of the Fourier transform the interested reader should refer to \cite{bracewell1986fourier}. \par 
	In the next step, the pair of Fourier transform will provide us with a proper and well-defined density function for the co-jump measure. First, we note that the $\mathbb{L}^{2}$-norm of $h_{n}$ is bounded. Indeed,
	
	\begin{equation} \label{hnorm}
	\begin{aligned}
	\|h_{n}\|_{L^{2}}=&\| \tilde{h}_{n}\|^{2}_{L^{2}}=\int |\tilde{h}_{n}(U)|^{2}dU\\
	&= w_{n}\left(\int_{U\leq U_{n}} dU + \int_{U> U_{n} }  e^{-2(U-U_{n})^{3} }dU\right)\\
	&= w_{n}\left(U_{n} + \int_{U_{n}}^{\infty} e^{-2(U-U_{n})^{3} }dU\right)\\
	&=  w_{n}\left(U_{n} + \int_{0}^{\infty} e^{-2K^{3} }dK\right)\\
	& \leq C w_{n} U_{n}=C w_{n}^{\frac{r-1}{r-2}}.
	\end{aligned}
	\end{equation}
	In the last inequality we used the fact that $\int_{0}^{\infty} e^{-2K^{3}}dK$ is bounded by a constant $C$. In addition, $h_{n}$ is an $\mathbb{L}^{2}$-function. Applying the Plancherel theorem we deduce that 
	\begin{equation} \label{Hnorm}
	\|H_{n}\|_{\mathbb{L}^{2}}=\|h_{n}\|_{\mathbb{L}^{2}}\leq C w_{n}^{\frac{r-1}{r-2}}.
	\end{equation} 
	Similarly, we get a bound for the first derivative of $H_{n}$
	\begin{equation}\label{hdernor}
	\| \partial_{1}H_{n}\|\leq \|\partial_{1}h_{n}\|_{\mathbb{L}^{2}}\leq Cw_{n}.
	\end{equation}
	Moreover, $\|H_{n}\|_{\mathbb{L}^{1}}$ is also bounded
	\begin{equation}\label{inth}
	\begin{split}
	\int |H_{n}(x)| dx&=\int \frac{1}{\sqrt{1+x^{2}}}\sqrt{1+x^{2}}|H_{n}(x)|dx\\
	&\leq \left(\int \frac{1}{1+x^{2}}dx\right)^{1/2}\left(\int H^{2}_{n}(x)(1+x^{2}) dx \right)^{1/2}\\
	& \leq C \left( H^{2}_{n}(x)+x^{2}H^{2}_{n}(x) dx \right)^{1/2}\\
	&\leq C\left(\|H_{n}\|_{\mathbb{L}^{2}}+\|\partial_{1}H_{n}\|_{\mathbb{L}^{2}} \right) \\
	&\leq C(1+ w_{n}^{\frac{r-1}{r-2}}).
	\end{split}
	\end{equation}
	We get the first inequality because of the Cauchy-Schwarz inequality. By means of (\ref{Hnorm}) and (\ref{hdernor}) the $\mathbb{L}^{1}$-norm of $H_{n}$ is bounded.\par 
	At this point we are ready to define the co-jumps measures $F_{n}(A)$ and $G_{n}(A)$ in terms of the Fourier transform $H_{n}(x)$. 
	\begin{equation}\label{cojump}
	F_{n}(A)=\int _{\R}\frac{|H_{n}(x)|}{x^{2}}\mathds{1}_{A}(x,x)dx \qquad G_{n}(A)=\int _{\R}\left(F_{n}(A)+\frac{H_{n}(x)}{x^{2}}\right)\mathds{1}_{A}(x,x)dx.
	\end{equation}
	for any $ A\in\mathcal{B}(\R^{2}) $. \par 
	These measures satisfy the basic properties of L\'evy measures. They are non-negative, integrate $x^{2}$, and may explode around zero since $H_{n}(0) \to \infty$. \par 
	It remains to prove (\ref{small}) under this argumentation. Based on the above construction and $ A=\{(x_1,x_2)\in \R^{2}:x_1=x_2=x\} $, (\ref{small}) transforms into:
	\begin{equation}\label{cojmeasure}
	\int (1\wedge |x|^{r})\frac{|H_{n}(x)|}{|x|^{2}}dx,
	\end{equation}
so we need to show that (\ref{cojmeasure}) is finite. Next we show how to bound from above $|H_{n}(x)|$. 
	\begin{equation}
	\begin{split}
	|H_{n}(x)|&\leq \int |e^{iUx}h_{n}(U)|dU \leq \int |h_{n}(U)\cos(Ux)|dU+i\int|h_{n}(U)\sin(Ux)|dU\\
	&=|h_{n}(U)\cos(Ux)|dU.
	\end{split}
	\end{equation}
	In the first line the second term is equal to zero since it is the integral of the product of an even and an odd function.\\ 
	\begin{equation}
	\begin{split}
	|H_{n}(x)|&\leq 2w_{n}\int_{0}^{U_{n}}|\cos(Ux)|dU+2w_{n}\int_{U_{n}}^{\infty}|e^{-2(U-U_{n})^{2}}\cos(Ux)|dU\\
	&\leq 2w_{n}\left(\frac{|\sin(U_{n}x)|}{|x|}+\int_{0}^{\infty} e^{-2K^{3}}\cos( (K+U_{n})x)dK\right)\\
	&\leq C w_{n}\left(\frac{|\sin(U_{n}x)|}{|x|} +1\right).     
	\end{split}
	\end{equation}
	Note that in the second inequality the integral is always bounded from above by a constant $C$. \par 
	On the sets $\left\{|x|\leq \frac{1}{U_{n}}\right\} $, $\left\{\frac{1}{U_{n}}<|x|\leq 1 \right\}$, $\left\{|x|>1 \right\}$ we deduce that
	\begin{enumerate}
		\item 
		$|x|\leq \frac{1}{U_{n}}\Rightarrow |U_{n}x|\leq 1 \Rightarrow |\sin(U_{n}x)| \leq U_{n}x\Rightarrow \frac{|\sin(U_{n}x)|}{|x|}+1\leq U_{n}$.
		\item 
		$  \frac{1}{U_{n}}<|x|\leq 1\Rightarrow \frac{|\sin(U_{n}x)|}{|x|}+1=\frac{|\sin(U_{n}x)|+|x|}{|x|}\leq \frac{2}{|x|} $.
		\item
		$|x|>1 \Rightarrow \frac{|\sin(U_{n}x)|}{|x|}+1\leq 2$. 
	\end{enumerate}
	In turn, we get that
	\begin{equation}\label{Hnx}
	|H_{n}(x)|\leq C w_{n}\left( U_{n}\mathds{1}\left(|x|\leq \frac{1}{U_{n}}\right)+\frac{1}{|x|}\mathds{1}\left(\frac{1}{U_{n}}<|x| \leq 1\right)+\mathds{1}(|x|>1)\right).
	\end{equation}
	By splitting the integration domain into the sets $\left\{|x|\leq \frac{1}{U_{n}}\right\} $, $\left\{\frac{1}{U_{n}}<|x|\leq 1 \right\}$, $\left\{|x|>1 \right\}$ and recalling that $r\in(1,2)$, condition (\ref{small}) will take the form:
	\begin{equation*}
	\begin{aligned}
	&\int (1 \wedge |x|^{r})\frac{|H_{n}(x)|}{|x|^{2}} dx\\
	&\leq C w_{n}\int \frac{ 1 \wedge |x|^{r}}{|x|^{2}}\left(U_{n}\mathds{1}\left(|x|\leq \frac{1}{U_{n}}\right)+\frac{1}{|x|}\mathds{1}\left(\frac{1}{U_{n}}<|x| \leq 1\right)+\mathds{1}(|x|>1)\right)dx\\
	&\leq Cw_{n}U_{n}^{2-r}\leq C.
	\end{aligned}
	\end{equation*}
	In light of the form of $U_{n}$ the last inequality holds. Recall that $U_{n}= 2w_{n}^{1/r-2} $. Therefore, (\ref{small2}) is satisfied, which implies condition (\ref{small}), by which the proof is complete.
\end{proof}
Till now we constructed the co-jump measure, which satisfies (\ref{small}), and the covariance matrices for the hypothesis test. So the triplets for the hypothesis test are now defined. Next step, we study the characteristic functions of the two processes, which will be useful later on the proof of \Cref{prop2}.

\subsection {Characteristic functions of $\textbf{X}_{1/n}$ and $\textbf{Y}_{1/n}$}
At this point, we study the processes $ \textbf{X},\textbf{Y} $ for one observation at the moment $t=\frac{1}{n}$. We denote by $\psi_{n}(\textbf{u})$, $ \phi_{n}(\textbf{u})$ the characteristic functions of $\textbf{X}_{1/n}$, $ \textbf{Y}_{1/n} $ respectively, and by $ \eta_{n}(\textbf{u})  = \psi_{n}(\textbf{u}) - \phi_{n}(\textbf{u})$ their difference. The characteristic triplet for each process is
\[
\textbf{X}_{1/n}\sim \left( \textbf{0}, \Sigma_{\textbf{X}}\Sigma_{\textbf{X}}^{\top}, G_{n}(d\textbf{x}) \right)
\]
\[
\textbf{Y}_{1/n}\sim \left( \textbf{0}, \Sigma_{\textbf{Y}}\Sigma_{\textbf{Y}}^{\top}, F_{n}(d\textbf{x}) \right),
\]
where $ \Sigma_{\textbf{X}}\Sigma_{\textbf{X}}^{\top}=\left(\begin{smallmatrix}2 & 1\\1 &1 \end{smallmatrix}\right) $ and $\Sigma_{\textbf{Y}}\Sigma_{\textbf{Y} }^{\top}=\left(\begin{smallmatrix}2 & 1+2w_{n}\\1+2w_{n} &1 \end{smallmatrix}\right) $. Denote by $ C_{\textbf{X}}=\Sigma_{\textbf{X}}\Sigma_{\textbf{X}}^{\top}$ and  $ C_{\textbf{Y}}=\Sigma_{\textbf{Y}}\Sigma_{\textbf{Y}}^{\top}$. The characteristic functions will be defined as follows
\begin{equation}
\phi_{n}(\textbf{u})=\exp \left\{-\frac{1}{2n}\left( \langle C_{\textbf{Y}}\textbf{u, \textbf{u}}\rangle +2\tilde{\phi}_{n}(\textbf{u})\right) \right\}
\end{equation}
and
\begin{equation}
\psi_{n}(\textbf{u})=\exp \left\{-\frac{1}{2n}\left( \langle C_{\textbf{X}}\textbf{u, \textbf{u}}\rangle +2\tilde{\phi}_{n}(\textbf{u})+2\tilde{\psi}_{n}(\textbf{u})\right) \right\}.
\end{equation}
We denote by
\begin{equation}\label{phiti}
\tilde{\phi}_{n}(\textbf{u}) = \tilde{\phi}_{n}(U)=\int_{A}\big(1-\cos (Ux)\big) \frac{|H_{n}(x)|}{x^{2}}dx
\end{equation}
and
\begin{equation}\label{psiti}
\tilde{\psi}_{n}(\textbf{u}) = \tilde{\psi}_{n}(U)=\int_{A}\big(1-\cos (Ux)\big)\frac{H_{n}(x)}{x^{2}}dx
\end{equation}
because of the form of co-jump measure (\ref{cojump}) and the fact that $H_{n}$ is an even function, its Fourier transform will be a real function. Also, recall the Fourier transform of the co-jump measure has support on $\R $ and $A$ is a subset of the diagonal. Moreover, 
\[
\langle C_{\textbf{X}}\textbf{u, \textbf{u}}\rangle=5U^{2}\qquad \mbox{and} \qquad \langle C_{\textbf{Y}}\textbf{u, \textbf{u}}\rangle= 5U^{2}+4w_{n}U^{2}.
\]
Next we bound from above (\ref{phiti}) and (\ref{psiti}). First, observe that
\[
\tilde{\psi}''_{n}(U)=\int \cos(Ux)H_{n}(x)dx=h_{n}(U),
\]
since $H_{n}$ is an even function. We consider the following two cases: 
\begin{equation}\label{psitil}
\begin{split}
&|U|\leq U_{n} \Rightarrow \tilde{\psi}''_{n}(U)=w_{n}
\Rightarrow  \tilde{\psi}'_{n}(U) = w_{n} U \Rightarrow  \tilde{\psi}_{n}(U) = w_{n}\frac{U^{2}}{2} \\
&|U|\geq U_{n} \Rightarrow \tilde{\psi}'_{n}(U) \leq w_{n}U
\Rightarrow |\tilde{\psi}_{n}(U)|\leq w_{n}\frac{U^{2}}{2}.
\end{split}
\end{equation}
Now, concerning the $\tilde{\phi}_{n}(U)$ we exploit the same arguments as before and by (\ref{inth}) we obtain
\begin{equation}\label{fifi}
\tilde{\phi}'_{n}(U)=\int \frac{x\sin(Ux)}{x^{2}}|H_{n}(x)|dx\leq \int \frac{|U|x}{x}|H_{n}(x)|dx\leq C|U|\left(1 + w_{n}^{\frac{r-1}{r-2}}\right)
\end{equation}
\begin{equation}\label{fi}
\tilde{\phi}_{n}(U)=\int \frac{1-\cos(Ux)}{x^{2}}|H_{n}(x)|dx\leq \int \frac{(Ux)^{2}}{x^{2}}|H_{n}(x)|dx
\leq CU^{2}\left( 1 + w_{n}^{\frac{r-1}{r-2}} \right).
\end{equation}
\subsection{Total variation distance}
In order to establish a lower bound for our class with the rates (\ref{rates}), the last ingredient to be shown is that the total variation distance between $\Pm_{\textbf{X}}$ and $\Pm_{\textbf{Y}}$ goes towards zero, property \ref{prop2}. Mathematically speaking, this formulates as
\begin{equation}\label{tota}
\tv(\Pm_{\textbf{X}}, \Pm_{\textbf{Y}})=2n\int \left(f_{1/n}(x)-g_{1/n}(x)\right)dx\to 0.
\end{equation} 
As we discussed in the first step, $\textbf{X}$ and $\textbf{Y}$ have a nonvanishing Gaussian part so that the variables $\textbf{X}_{1/n}$ and $\textbf{Y}_{1/n}$ have densities. Here, $f_{1/n}$ and $g_{1/n}$ denote their densities respectively. Also, $k_{n}=f_{1/n}-g_{1/n}$ denotes the difference between the densities. One would be tempted to use the following 
\begin{equation}\label{tv}
\begin{aligned}
\tv(\Pm_{0}, \Pm_{1})&=2 n\int \Big(f_{1/n}(x)-g_{1/n}(x)\Big)dx\\
&=2 n\int \left( \int e^{-iUx}\big(\phi_{n}(U)-\psi_{n}(U)\big) dU\right)dx\\
&\leq 2 n \int \left( \int |e^{-i Ux}| |\phi_{n}(U)-\psi_{n}(U)|dU\right)dx\\
&\leq 2 n\int \left( \int  |\phi_{n}(U)-\psi_{n}(U)|dU\right)dx.
\end{aligned}
\end{equation}
In the second equality we wrote the density function as the Inverse Fourier transform of its characteristic function. But the last integral is infinite. Hence, this procedure is not working for our goal. Since we want to prove that 
\begin{equation}
2 n \int  |\phi_{n}(U)-\psi_{n}(U)|dU\to 0,
\end{equation}
we know that the total variation distance between $\Pm_{\textbf{X}}$ and $\Pm_{\textbf{Y}}$ is not more than $2n$ times $\int |k_{n}(x)|dx$. 
By using the same argument as for the \cite{jacod2014remark} Theorem 3.1, by Cauchy-Schwarz inequality and Plancherel theorem, we obtain  
\[
\begin{aligned}
\int|k_{n}(x)|dx&=\int\frac{1}{\sqrt{1+|x|^2}}(\sqrt{1+|x|^2})|k_{n}(x)|dx\\
&\leq K\bigg(\int \left(k_{n}^{2}(x)+|x|^{2}k_{n}^{2}(x)\right)dx\bigg)^{1/2}\\
&\leq K\left(\|\eta_{n}\|_{\mathbb{L}^{2}}+ \|\partial _{1}\eta_{n}\|_{\mathbb{L}^{2}}\right)^{1/2}
\end{aligned}
\]
where $\partial_{1}\eta_{n}$ is the first derivative of $\eta_{n}(U)$. In the second inequality we used the Cauchy-Schwarz inequality, and in the last one we used Plancherel identity. By virtue of simplicity, remember that we use the same coordinates for the vector $\textbf{u}=(U,U)$.\par 
Thus, the only ingredient which remains to be shown is the following lemma in order to satisfy Property \ref{prop2}.

\begin{lemma}\label{totava}
	We show that 
	\begin{equation}\label{totalvar}
	4n^{2}\int |\eta_{n}(U)|^{2}dU \to 0 \quad \mbox{and}\quad 4n^{2}\int |\partial_{1}\eta_{n}(U)|^{2}dU \to 0
	\end{equation}
	as $n\to \infty$. 
\end{lemma}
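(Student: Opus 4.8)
The plan is to show that $\eta_{n}$ vanishes identically on the bulk frequency band $[-U_{n},U_{n}]$ and then to reduce the two integrals in (\ref{totalvar}) to Gaussian tail estimates over $\{|U|>U_{n}\}$, where $\phi_{n}$ is polynomially small in $n$ because $U_{n}^{2}/n$ is of order $\log n$.

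First I would record that, since $\textbf{X}$ and $\textbf{Y}$ carry no drift and their co-jump measures are symmetric in $x$, the functions $\psi_{n}(U)$ and $\phi_{n}(U)$ are real and strictly positive, so the displayed exponential formulas are literal. Subtracting the exponents and writing $\Delta_{C}(U)=\langle C_{\textbf{Y}}\textbf{u},\textbf{u}\rangle-\langle C_{\textbf{X}}\textbf{u},\textbf{u}\rangle$ (a multiple of $U^{2}$), one gets
\[
\log\psi_{n}(U)-\log\phi_{n}(U)=\frac{1}{2n}\bigl(\Delta_{C}(U)-2\tilde{\psi}_{n}(U)\bigr).
\]
By (\ref{psitil}) the co-jump term $\tilde{\psi}_{n}$ coincides on $[-U_{n},U_{n}]$ with an explicit multiple of $U^{2}$, and the covariance perturbation in the two-hypothesis test (\autoref{propo} and the surrounding construction) has been chosen to equal precisely that multiple; hence $\Delta_{C}(U)-2\tilde{\psi}_{n}(U)=0$ for $|U|\le U_{n}$, so $\psi_{n}\equiv\phi_{n}$ and $\eta_{n}\equiv 0$ there. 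Since the cube in the exponent of $\tilde{h}_{n}$ makes $h_{n}$, hence $\tilde{\psi}_{n}$ and $\eta_{n}$, of class $C^{2}$, also $\partial_{1}\eta_{n}\equiv 0$ on $[-U_{n},U_{n}]$, so it suffices to bound $\int_{|U|>U_{n}}|\eta_{n}|^{2}\,dU$ and $\int_{|U|>U_{n}}|\partial_{1}\eta_{n}|^{2}\,dU$.

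For the tails I would use Gaussian domination. Because $\tilde{\phi}_{n}\ge 0$ and, since $|H_{n}|\ge H_{n}$, also $\tilde{\phi}_{n}+\tilde{\psi}_{n}\ge 0$, while $\langle C_{\textbf{X}}\textbf{u},\textbf{u}\rangle=5U^{2}$ and $\langle C_{\textbf{Y}}\textbf{u},\textbf{u}\rangle\ge 5U^{2}$, both exponents are at least $\tfrac{5U^{2}}{2n}$, so $0<\psi_{n}(U),\phi_{n}(U)\le e^{-5U^{2}/(2n)}$ and $|\eta_{n}(U)|\le 2e^{-5U^{2}/(2n)}$. Differentiating, $\partial_{1}\phi_{n}=-\phi_{n}\,\partial_{1}(-\log\phi_{n})$ with $|\partial_{1}(-\log\phi_{n})(U)|\le\tfrac{C}{n}\bigl(|U|+|\tilde{\phi}_{n}'(U)|\bigr)$, and (\ref{fifi}) together with $w_{n}^{(r-1)/(r-2)}/n=(n\log n)^{(r-1)/2}/n\to 0$ gives $|\partial_{1}(-\log\phi_{n})(U)|\le C|U|$ for $n$ large; the same bound holds for $\psi_{n}$ via (\ref{psitil}), so $|\partial_{1}\eta_{n}(U)|\le C|U|e^{-5U^{2}/(2n)}$. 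On $\{|U|>U_{n}\}$ I would factor $e^{-5U^{2}/n}\le e^{-5U_{n}^{2}/(2n)}e^{-5U^{2}/(2n)}$; since $U_{n}=2w_{n}^{1/(r-2)}=2\sqrt{n\log n}$, one has $U_{n}^{2}/n=4\log n$, hence $e^{-5U_{n}^{2}/(2n)}=n^{-10}$, and an elementary Gaussian integral yields
\[
\int_{|U|>U_{n}}|\eta_{n}|^{2}\,dU\le Cn^{-10}\sqrt{n},\qquad \int_{|U|>U_{n}}|\partial_{1}\eta_{n}|^{2}\,dU\le Cn^{-10}n^{3/2}.
\]
Multiplying by $4n^{2}$ gives $4n^{2}\int|\eta_{n}|^{2}\,dU\le Cn^{-15/2}\to 0$ and $4n^{2}\int|\partial_{1}\eta_{n}|^{2}\,dU\le Cn^{-13/2}\to 0$, which is (\ref{totalvar}).

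The hard part will be the first step: the cancellation on $[-U_{n},U_{n}]$ must be \emph{exact}, since on that band $\phi_{n}$ is of order one and the weight $4n^{2}$ would otherwise overwhelm the residual mass of $|\eta_{n}|^{2}$; checking that the covariance perturbation and the co-jump intensity are matched on the whole range where $\tilde{\psi}_{n}$ is quadratic is the delicate point, and it also pins down the constants. What then renders the high-frequency remainder negligible is the logarithmic inflation $U_{n}\asymp\sqrt{n\log n}$, which forces $\phi_{n}(U_{n})$ to be polynomially small in $n$ — this is precisely where the extra $\log n$ in the rate $w_{n}$ (and in the choice of $U_{n}$) is used to prove the lower bound.
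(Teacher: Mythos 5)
Your proof is correct and follows essentially the same route as the paper's: the co-jump perturbation $\tilde{\psi}_{n}$ is exactly quadratic on $\{|U|\le U_{n}\}$ and is matched to the covariance perturbation so that $\eta_{n}$ vanishes there, and the high-frequency tail is killed by Gaussian decay because $U_{n}^{2}/n\asymp\log n$ makes the characteristic functions polynomially small in $n$. Your tail bounds are a bit cruder than the paper's (which keeps the factor $w_{n}U^{2}/(2n)$ from $1-e^{-x}\le x$ and handles the resulting moment integrals via Cauchy--Schwarz), but they are sufficient for the stated convergence.
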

\begin{proof}
	First, we study the convergence of $\eta_{n}(U) $:
	\begin{equation}\label{eta}
	\begin{split}
	|\eta_{n}(U)|=&|\phi_{n}(U)-\psi_{n}(U)|=\cfx \bigg|1-\frac{\cfy}{\cfx} \bigg|\\
	&=\cfx\bigg| 1-\exp\left( \frac{1}{2n}(2\cfxi-w_{n}U^{2})\right) \bigg|\\
	&\leq \cfx\bigg|\frac{1}{2n}\left(2\cfxi-w_{n}U^2\right)\bigg|.
	\end{split}
	\end{equation}
	The last inequality holds due to the fact that $1-e^{-x}\leq x $. \par 
	Observe that when $|U|\leq U_{n} $, $ \eta_{n}(U)=0 $ because of the constant value of $h_{n}$ inside this interval. Thus the difference of the characteristic functions vanishes for $ |U|\leq U_{n} $ because of $ 2\cfxi=w_{n}U^2 $ by (\ref{psiti}). \par 
	By means of $\tilde{\phi}_{n}(U), \tilde{\psi}_{n}(U)\geq 0 $, we get that $ \cfx \leq e^{-\frac{U^{2}}{2n}}$ and  $ \cfy \leq e^{-\frac{U^{2}}{2n}}$. Thus, 
	\[
	|\eta_{n}(U)|\leq \frac{U^{2}w_{n}}{2n}e^{-\frac{U^{2}}{2n}}\mathds{1}_{\left\{|U|\geq u_{n}\right\}}.
	\]
	We define by
	\begin{equation}\label{int}
	A := \int_{\left\{|U|\geq U_{n}\right\}}U^{4}e^{-\frac{U^{2}}{n}} dU.
	\end{equation}
	Using Cauchy-Schwarz inequality we bound the integral (\ref{int}) from above by
	\begin{equation}\label{a}
	A \leq 2 \Bigg(\int_{U_n}^{\infty} U^{5}e^{-\frac{U^{2}}{n}} dU \Bigg)^{1/2}\cdot\Bigg(\int_{U_n}^{\infty} U^{3}e^{-\frac{U^{2}}{n}} dU \Bigg)^{1/2}.
	\end{equation}
	The integrals to the right can be calculated exactly by calculus methods, and recalling $ U_{n}= 2w_{n}^{1/(r-2)} $ we get 
	\begin{equation}
	4n^{2}\int |\eta_{n}(U)|^{2}dU \leq 4n^{2} \int_{\left\{|U|\geq U_{n}\right\}}U^{4}e^{-\frac{U^{2}}{n}} dU\leq C\frac{(\log n)^{3/2}}{n^{3/2}}.
	\end{equation}
	The first part of the (\ref{totalvar}) follows. Now recall the form of the characteristic functions and their difference $ \eta_{n} = \psi_{n}(U) - \phi_{n}(U) $
	\[
	\psi_{n}(U) = \exp\left\{-\frac{1}{2n}\left(5U^{2} + 2 \tilde{\phi}_{n}(U) +2 \tilde{\phi}_{n}(U)\right)\right\}
	\]
	\[
	\phi_{n}(U) = \exp\left\{-\frac{1}{2n}\left(5U^{2}  +4 w_{n}U^{2} + 2 \tilde{\phi}_{n}(U) \right)\right\}.
	\]
	Therefore by (\ref{psitil}), (\ref{fifi}), and the fact that $ \cfx \leq e^{-\frac{U^{2}}{2n}}$,  $ \cfy \leq e^{-\frac{U^{2}}{2n}}$ we get that 
	\begin{equation}
	\begin{split}
	|\partial_{1}\eta_{n}(U)|&=\frac{1}{n}\bigg|\left(5U +  4w_{n}U + \tilde{\phi}_{n}'(U)\right)\phi_{n}(U) - \left(5U + \tilde{\phi}_{n}(U) + \tilde{\psi}_{n}'(U)\right)\psi_{n}(U)\bigg|\\
	&=\frac{1}{n}\bigg| 4w_{n}U\phi_{n}(U)+\left(\tilde{\phi}_{n}'(U)+5U\right)\eta_{n}(U)-\tilde{\psi}_{n}'(U)\psi_{n}(U)\bigg|\\
	&\leq \frac{1}{n}\left(4 w_{n}|U|e^{-\frac{U^{2}}{2n}} +|\tilde{\phi}_{n}'(U)+5U|e^{-\frac{U^{2}}{2n}}\frac{w_{n}U^{2}}{2n} - w_{n}|U|e^{-\frac{U^{2}}{2n}} \right)\\
	&\leq C\frac{w_{n}|U|}{n}e^{-\frac{U^{2}}{2n}}\left(1+\left(w_{n}^{\frac{r-1}{r -2}}+1\right)\frac{U^{2}}{2n}\right).
	\end{split}
	\end{equation}
	Therefore,
	\begin{equation}\label{con}
	\begin{aligned}
	4n^{2}\int_{U_{n}}^{\infty}	|\partial_{1}\eta_{n}(U)|^{2} \leq & Cw_{n}^{2} \int_{U_{n}}^{\infty}U^{2}e^{-\frac{U^{2}}{n}}dU + C\frac{w_{n}^{2}}{n^{4}}\left(1 + w_{n}^{\frac{r-1}{r-2}}\right)^{2} \int_{U_n}^{\infty} U^{6}e^{-\frac{U^{2}}{n}}dU \\ 
	&  + C\frac{w_{n}^{2}}{n}\left(1 + w_{n}^{\frac{r-1}{r-2}}\right)^{2}\int_{U_n}^{\infty} U^{4}e^{-\frac{U^{2}}{n}}dU.
	\end{aligned}
	\end{equation}
	Now, $\frac{1 + w_{n}^{\frac{r-1}{r-2}}}{n^{2}}$ and $\frac{1 + w_{n}^{\frac{r-1}{r-2}}}{n}$  tend towards zero. Additionally, the integrals on the right side can be bounded again using Cauchy-Schwarz inequality, like integral A. Following these, we can calculate the integrals through basic calculus methods exactly. As a result,
	\begin{equation}\label{part}
	4n^{2}\int_{U_{n}}^{\infty} |\partial_{1}\eta_{n}(U)|^{2}dU \leq C \frac{(\log n)^{2}}{n^{1/2}},
	\end{equation}
	which also goes to zero as $ n\to \infty $ and the proof is completed. 
\end{proof}
\subsection* {End proof of \autoref{lobo}}

\textit{Lower bound for  the rate $w_{n}= \frac{1}{\sqrt{n}}$ when $r\in(0,2)$.}\\
To prove this bound, it is enough to show that it holds on the subclass of all Brownian motions since $\mathcal{B}_{M}\supset \class$. Taken together with Lemma \ref{twodim}, this bound is achieved.\\
\textit{Lower bound for the rate $w_{n}=1/(n\log n)^{\frac{2-r}{2}}$ when $r\in (1,2)$}.\\
The main steps of this proof are to show that Property \ref{prop1} and Property \ref{prop2} are satisfied. Now, with reference to Lemma \ref{diagon} for the construction of co-jump measure, Proposition \ref{prob} and Lemma \ref{totava}  we conclude the  proof of \autoref{lobo}.

\section{Discussion}\label{sec:remarks}
In this section we make some important remarks concerning the upper bound and the rates of convergence.
First, we want to compare the efficiency of our estimator with the work of \cite{mancini2017truncated} in which she considered at least one jump component of a two-dimensional It\^o semimartingale with infinity variation. \cite{mancini2017truncated} introduced the truncated realized covariance (TRC) as an estimator for co-integrated volatility. The proposed estimator is 
\[
\widehat{IC}=\sum_{i=1}^{n}\Delta_{i} X^{(1)}\ch_{\{(\Delta_{i} X^{(1)})^2\leq r_h\}} \Delta_{i} X^{(2)}\ch_{\{(\Delta_{i} X^{(2)})^2\leq r_h\}},
\]
where $ r_{h} = h^{2u}$ is the truncation level with $ h = 1/n $, $ u \in(0,\frac{1}{2}) $ and $ n\to \infty $. It is clear that, when $ r_{h} \to 0 $, asymptotically all jumps are excluded. It is assumed that the two jump components have an index activity $r_{1}$,$r_{2}$ where $ 0\leq r_{1}\leq r_{2}<2$ and $ r_{2}\geq 1 $. Notice by recalling \Cref{coj} that in our case we used the index r for the activity of co-jumps. In the following we use the notation ``$ \gg $''  to assume the index activity is much greater than $ 1 $ and close to $ 2 $. The truncated estimator achieves the rate $(1-\gamma)\sqrt{r_{h}}^{(1+\frac{r_{2}}{r_{1}}-r_{2})}$ when $ r_{1} ,r_{2}\gg 1 $. This estimator reaches the rate $h\sqrt{r_{h}}^{-\frac{r_{2}}{2}}$ when the two jump components are independent or $ r_{2}\gg r_{1} $ and $ r_{1} $ is small and the rate $ \sqrt{h} $ when $ r_{1} $ is small and $ r_{2} $ is close to $ 1 $. The parameter $\gamma$ describes the dependence structure of the two jumps with $\gamma\in[0,1]$. When $\gamma=0$ we have full dependency between the jump components, while $\gamma=1$ means independence between the jump components. Finally, for a fair comparison with the spectral estimator we assume $\sqrt{r_{h}}$ to be approximately $\frac{1}{\sqrt{n}}$ as truncation level, since $u\in(0,\frac{1}{2})$.  The truncation level is not optimal, but the work of \cite{figueroa2017optimum} proposed an optimal way for the truncation level using mean and conditional mean square error for the case of a one-dimensional It\^o semimartingale. \par
The reliability of estimators (\textit{spectral, truncated}) is summarized and assessed in the following table. For simplicity we take into consideration only the two extreme cases of dependency. In the first two rows we assume $ \gamma=0 $, i.e., the dependency setting among the jump components. While in the last row we consider $ \gamma=1 $, i.e., the jumps are totally independent. In order to compare the estimators we use \Cref{coj} and \Cref{example}.\\

\begin{tabular}{ |p{4cm}||p{3cm}|p{3cm}|  }
	\hline
	\multicolumn{3}{|c|}{Rates of convergence} \\
	\hline
	$\textcolor{darkgreen}{r, r_1, r_2 \in[0,2)} $& \textcolor{blue}{TRC estimator} &\textcolor{blue}{Spectral estimator}\\
	\hline
	$r_{2}$ close to $ 1 $, $ r_{1} $ small &$ n^{-\frac{1}{2}}$&  $n^{-\frac{1}{2}}$  \\
	$r, r_{1},r_{2} $ close to $ 2 $& $n^{-\frac{1}{2}\big(1+\frac{r_{2}}{r_{1}}-r_{2}\big)}$&  $(n\log n)^{\frac{r-2}{2}}$ \\
	$r_{1}$ small, $ r_{2} $ close to $ 2 $ &$ n^{\frac{r_{2} - 2}{2}}$& $ (n\log n)^{\frac{r-2}{2}}$ \\
	Independent jumps &  $ n^{\frac{r_{2}-2}{2}}$& $(n\log n)^{\frac{r-2}{2}} $\\
	\hline
\end{tabular}

\vspace{0.5cm}
Here we notice that when we assume a dependence structure among the jump components the spectral estimator achieves faster rates than the truncated estimator, when we assume infinite variation for both jump components. Notwithstanding, the truncated estimator establishes same rates with the spectral estimator when both jump components have index activity close to $ 1 $. Finally, when we assume either independence between jump components or $ r_1 $ is much smaller than $ r_{2} $ then the spectral estimator reaches a faster rate.
\subsection{Numerical experiments.}
In this section we test our estimates with Monte-Carlo experiments.\footnote{The interested reader can view the code at \url{https://github.com/KarinaPapayia/Co-integrated-volatility-multidimensional-Levy-processes}} This means that we first have to simulate the sample paths of a bivariate L\'evy process on $ [0, 1] $. \par
Section 6 of \cite{tankov2003financial} suggested various simulation algorithms for L\'evy processes. We extend here Algorithms 6.6, 6.5, 6.3 to a bivariate setting. In addition, we use the generalized shot noise method for series representation of a two-dimensional L\'evy process of infinite variation introduced by \cite{rosinski1990}. \par

We now perform Monte-Carlo tests of our spectral estimate $\widehat{C}_{12}^{N}(U_N) $, comparing it to the Truncated Realized Covariance (TRC) estimate $ \widehat{IC}_{T} $ of \cite{mancini2017truncated} for a two-dimensional It\^o semimartingale. To provide a balanced comparison, we will draw our observations from a process $ X_t = B_t + J_t$, where $ B_t $ is a two-dimensional Brownian motion and $ J_t $ is a two-dimensional jump process. Its jumps are driven by a two-dimensional $ r $-stable process. $ X_t $ thus models a process with both diffusion and jump components. In each run of our simulation, we will generate $ N = 1,000 $ observations, corresponding to observations taken every $ 1/1,000 $ over a time interval $ [0, 1] $. \par
\begin{figure}[H]
	\includegraphics[width=9cm]{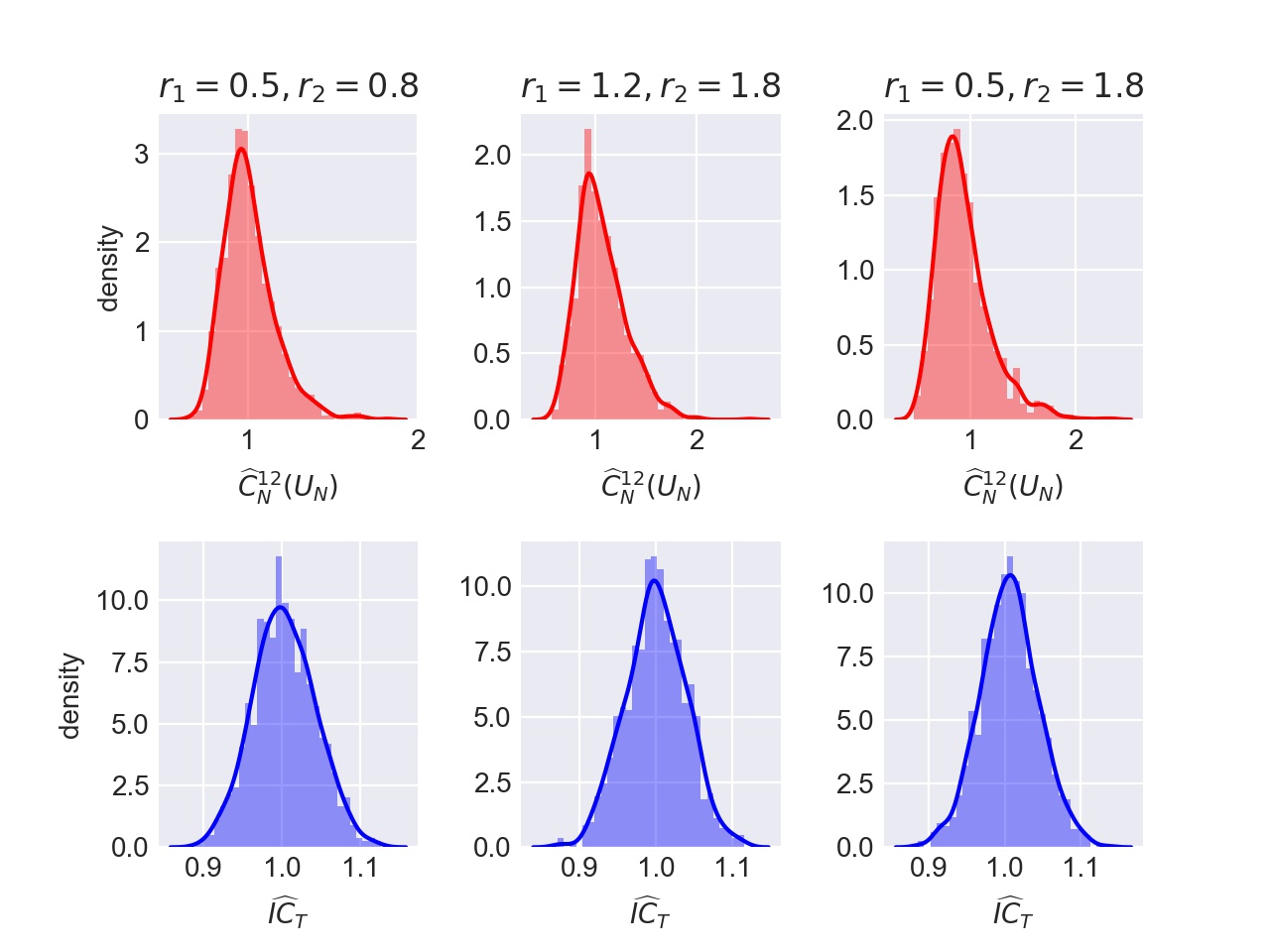}
	\caption{Simulated distributions of the estimates.}
	\label{fig:hist}
\end{figure}
The estimates $ \widehat{C}^{12}_{N}(U_N) $ and $ \widehat{IC}_{T} $ depend on a number of parameters. We begin by considering the covariance matrix $C = \left( \begin{smallmatrix} 2 & 1\\1 & 1\end{smallmatrix}\right) $ for two correlated Brownian motions. In our simulations, the cointegrated volatility of $ X_t $ is equal to $ 1 $, and so we may choose the parameters accordingly. In our tests, we found the value $ M = 4.229 $ worked well for bounding from above the jump activity in the case of infinite variation jumps. In the case of $\widehat{IC}_{T} $ we chose $ h = 1/1,000 $, $ u = 0.387 $, and as truncation level $ r_{h} = \left(\frac{1}{1,000}\right)^{2 * 0.387} $. We found that this truncation level cuts the jumps bigger than $\left(\frac{1}{1,000}\right)^{2 * 0.387} $, which means that almost all jumps were eliminated.\par
Figure \ref{fig:hist} plots the simulated distributions of the estimates $ \widehat{C}_{N}^{12}(U_{N}) $ and $ \widehat{IC}_{T}$ together with the density of a standard Gaussian distribution, shown as a solid line. We can see that in every choice for $ r_{1}, r_{2} $, the estimates are centered around $ 1 $, which is the expected theoretical cointegrated volatility.\par
Figure \ref{fig:rmse1} plots the RMSEs of the estimates $\widehat{C}_{N}^{12}(U_{N}) $, $\widehat{IC}_{T} $ against different choices for the index activity of the co-jumps. We study the performance of the estimates under finite, moderate, and infinite activity of co-jumps. We can see that, as $ N $ grows, the RMSE of the spectral estimate $\widehat{C}_{N} ^{12}(U_{N})$ is getting smaller compared with the truncated estimate. However, we observe that the RMSEs of the truncated estimate $ \widehat{IC}_{T} $ are smaller compared with the spectral estimate when $ N=1,000 $. \par
\begin{figure}[H]
	\includegraphics[width=9cm]{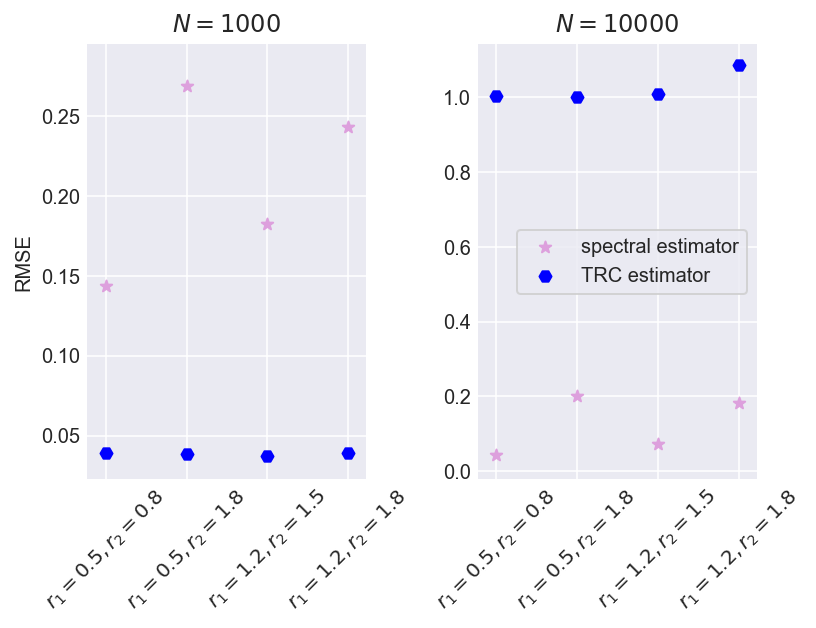}
	\caption{Simulated RMSEs of the estimates $\widehat{C}_{N}^{12}(U_{N}) $ and $  \widehat{IC}_{T}$. }
	\label{fig:rmse1}
\end{figure}
We observe this behavior in Figure \ref{fig:rmse1} for the truncated estimate $\widehat{IC}_{T} $ because of our choice of truncation level, which is not an optimal. While the threshold $r_{h} = \left(\frac{1}{1,000}\right)^{2 * 0.387}  $ works well for $ N = 1,000 $, it does not work well when the number of observations is bigger, for example when $ N = 10,000 $.\par

Figures \ref{fig:vp1}, \ref{fig:vp2}  give violin plots for the spectral estimate $ \widehat{C}_{N}^{12}(U_{N}) $ under a number of choices for the amount of observations $ N $ and the index activity for the co-jumps, whilst Figures \ref{fig:vp3}, \ref{fig:vp4} show violin plots for the truncated estimate $ \widehat{IC}_{T} $ under the same settings. The number of observations varies from $ 1,000 $ to $ 10,000 $ by step $1,000$. \par
\begin{figure}[H]
	\includegraphics[width=10cm]{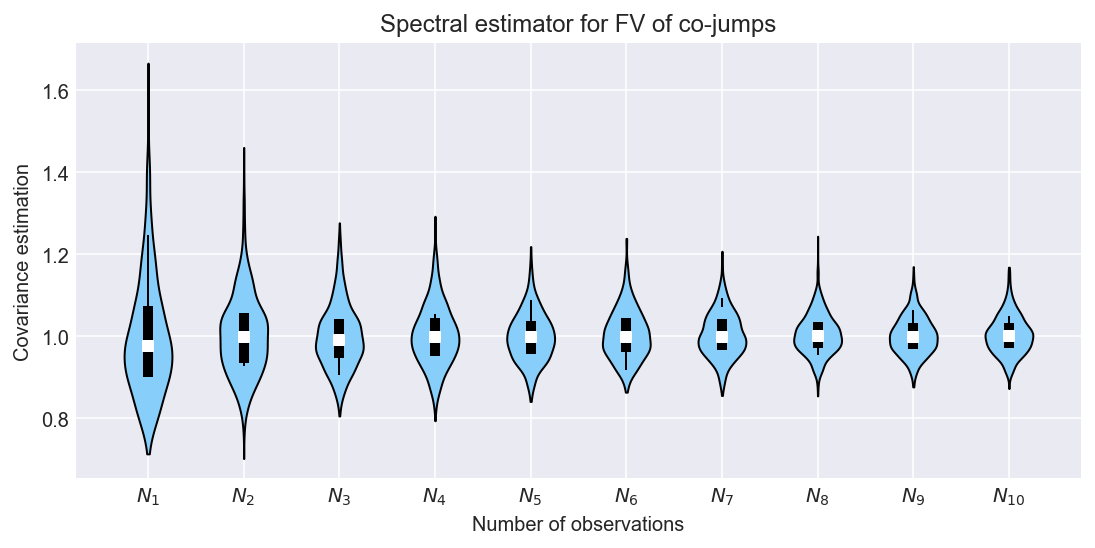}
	\caption{Violin plots for the estimates $\widehat{C}_{N}^{12}(U_{N}) $.} 
	\label{fig:vp1}
\end{figure}
\begin{figure}[H]
	\includegraphics[width=10cm]{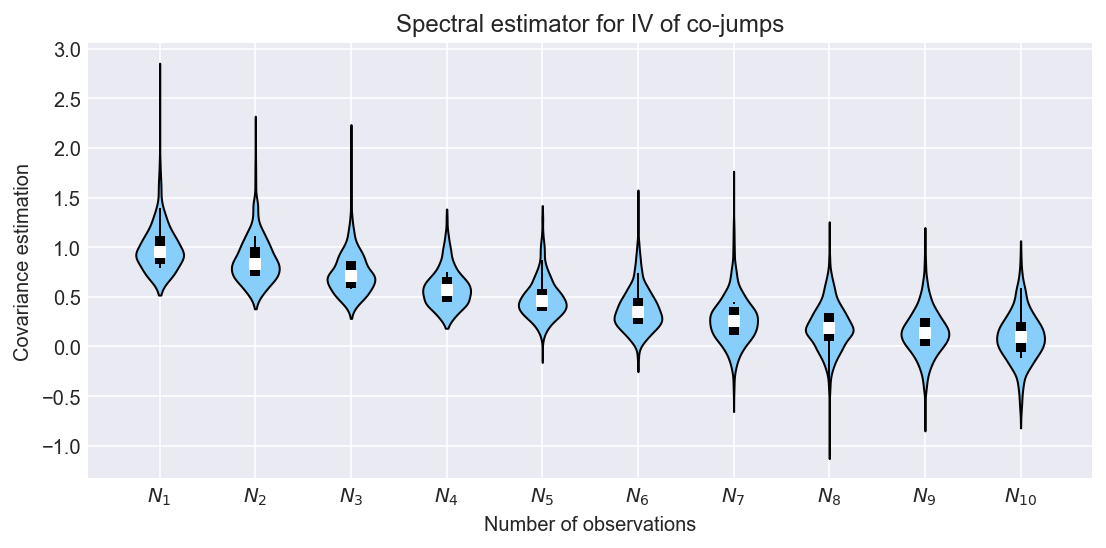}
	\caption{Violin plots for the estimates $\widehat{C}_{N}^{12}(U_{N}) $.} 
	\label{fig:vp2}
\end{figure}
In Figure  \ref{fig:vp1}, we used as an index activity for the jumps $ r_{1} =0.5 $, $ r_{2} = 0.8 $, while in Figure \ref{fig:vp2} we set $ r_{1} = 1.2 $ and $ r_{2} = 1.8$. In the case of  $ r_{1} = 1.2 $ and $ r_{2} = 1.8$, we see that the estimation for the covariance 
slightly deviates from the center as $ N $ grows. Furthermore, the effect can be expected to disappear as $ N $ tends toward infinity.\par
\begin{figure}[H]
	\includegraphics[width=10cm]{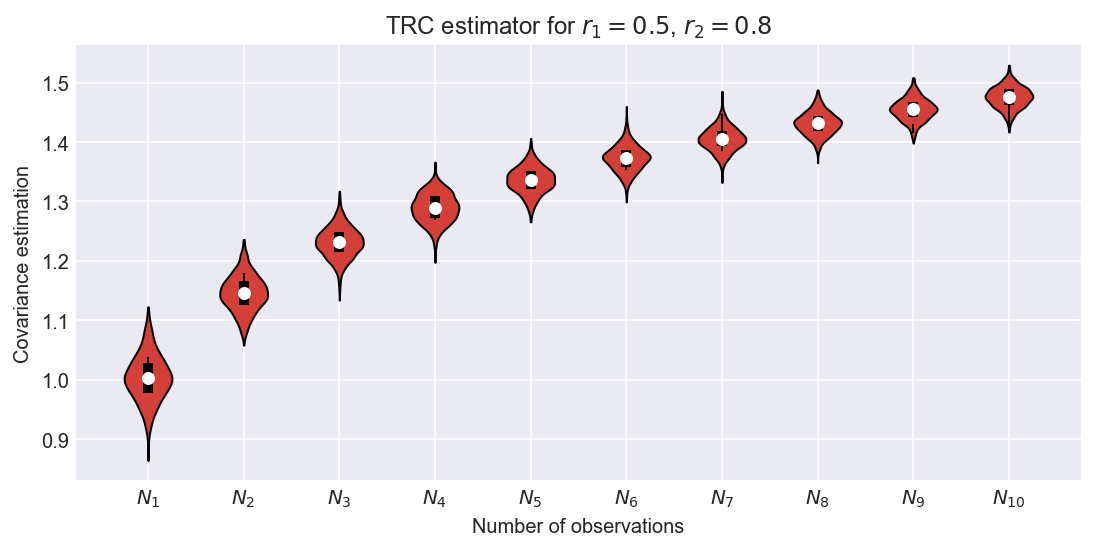}
	\caption{Violin plots for the estimates $\widehat{IC}_{T}$.} 
	\label{fig:vp3}
\end{figure}
\begin{figure}[H]
	\includegraphics[width=10cm]{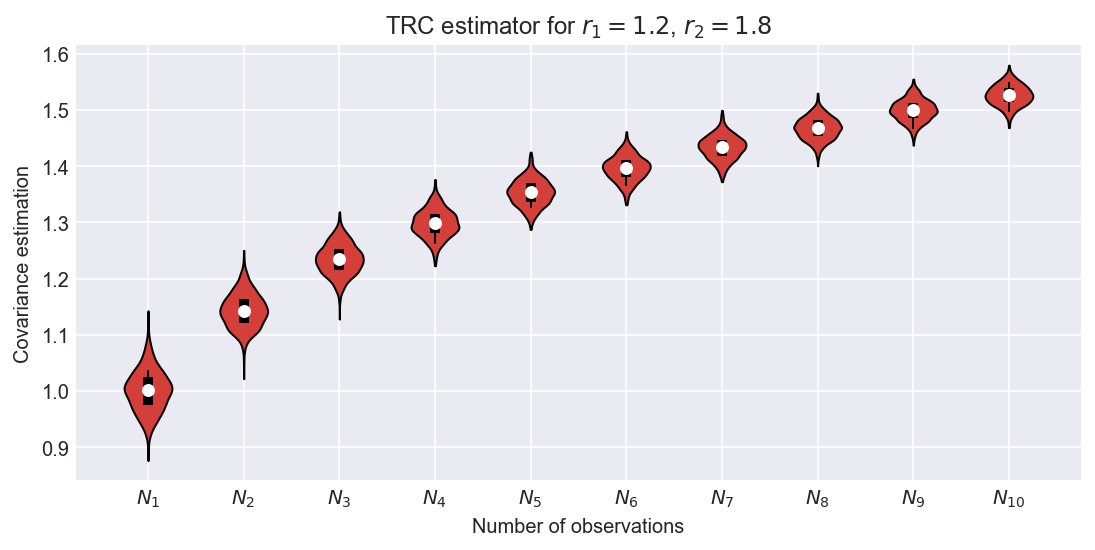}
	\caption{Violin plots for the estimates $\widehat{IC}_{T}$.} 
	\label{fig:vp4}
\end{figure}
Figures \ref{fig:vp3} and \ref{fig:vp4} show again that the truncated estimate $ \widehat{IC}_{T} $ deviates strongly from the center as $ N $ grows, an expected effect due to the choice of truncation level. The chosen threshold works well for $ N=1,000 $ but not when $ N $ grows. We expect this effect to disappear once the optimal choice for the threshold $ r_{h} $ is established. Finally, $ U_{N} $ is the parameter which controls the frequency for our spectral estimate $ \widehat{C}_{N}^{12}(U_{N}) $. $ U_{N} $ depends on $ N, M, r $. In view of the form (\ref{you}) for $ U_{N} $ we can find a constant to multiply which will give us the optimal choice for $ U_{N} $. The results will still hold. In fact \ref{fig:M} shows that the spectral estimate for $ M >3.31 $, $ N = 5,000 $ and $ r = 1.5 $ is centered around the theoretical co-integrated volatility $ C^{12} $. Figure \ref{fig:M} shows violin plot for the spectral estimate tuning up the parameter $ M $, which ranges from $ 3.30 $ to $ 4.40 $ .  \par
\begin{figure}[H]
	\includegraphics[width=10cm]{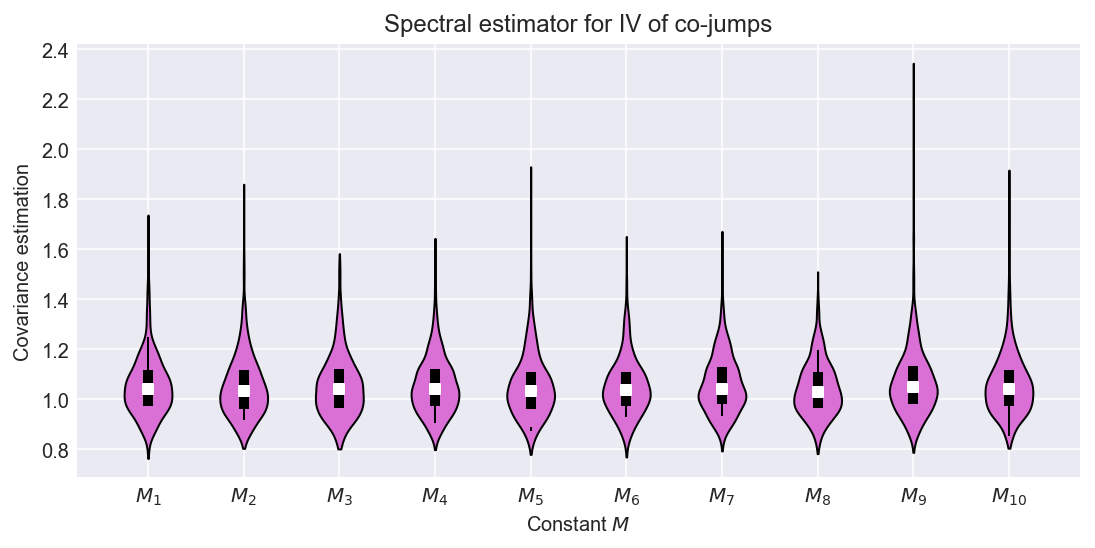}
	\caption{Tuning parameter $M  $ for the estimates $\widehat{C}_{N}^{12}$.} 
	\label{fig:M}
\end{figure}
Figure \ref{fig:U} shows that the truncated estimate $ \widehat{IC}_{T} $ is quite sensitive to the choice of threshold. Here, we used $N= 5,000$, $ r_{1}= 1.2$, $ r_{2} = 1.5$, $ h = 1/5,000 $ and $ u $ varies from $ 0.41 $ to $ 0.42 $. Recall that $ r_{h}= h^{2u} $.
\begin{figure}[H]
	\includegraphics[width=10cm]{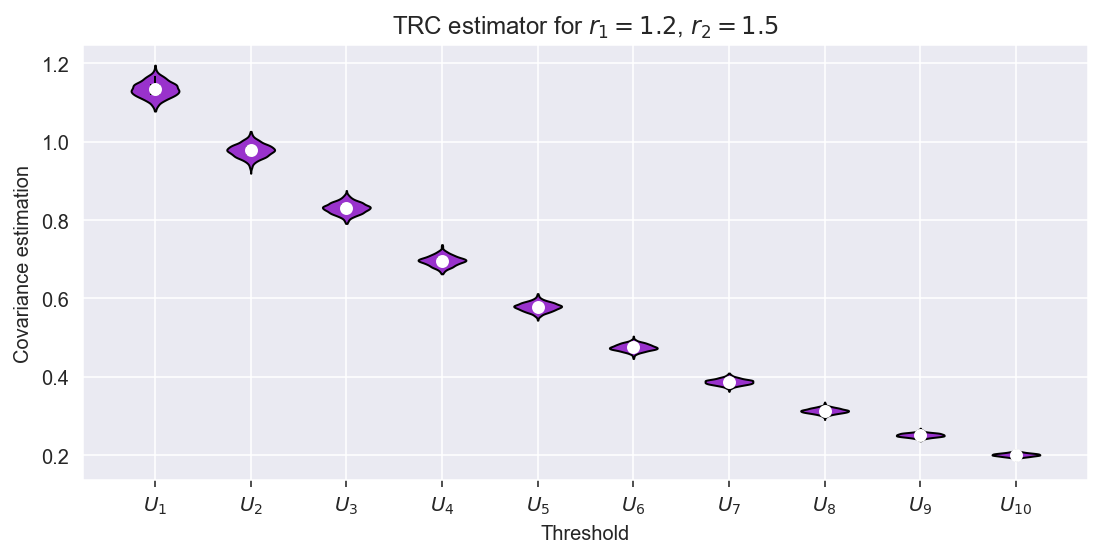}
	\caption{Tuning parameter $ r_{h} $ for the estimates $\widehat{IC}_{T}$.} 
	\label{fig:U}
\end{figure}
We notice that the threshold estimate deviates strongly from the theoretical co-integrated volatility. Figure \ref{fig:U} shows that the threshold estimate is centered around $ C^{12} $ when $ u = 0,412 $. The optimal choice of the threshold is not trivial. To sum up, it is easier to tune up the parameters for the spectral estimate rather than the threshold for the truncated estimate.
\section*{Acknowledgement}
The author is very grateful to an anonymous referee for many helpful questions and remarks that have led to considerable improvements and to Markus Rei$\ss$ for stimulating comments and discussions. This work was partially supported by Deutsche Forschungsgemeinschaft via IRTG 1792 \textit{High Dimensional Nonstationary Time Series}.

\bibliography{bibcov}{}
\bibliographystyle{plainnat}

\end{document}